\documentclass[11pt]{amsart}
\title[Some lower bounds for the Kirby-Thompson invariant]
{Some lower bounds for the Kirby-Thompson invariant}
\author{Nobutaka Asano}
\address{National Institute of Technology, Tsuyama College, 624-1 Numa, Tsuyama-shi, Okayama, 708-8509, Japan}
\email{asano-n@tsuyama-ct.ac.jp}
\author{Hironobu Naoe}
\address{Department of Mathematics, Chuo University, 1-13-27 Kasuga Bunkyo-ku, Tokyo, 112-8551, Japan}
\email{naoe@math.chuo-u.ac.jp}
\author{Masaki Ogawa}
\address{Department of Mathematics, Saitama University, 255 Shimo-Okubo, Sakura-ku, Saitama-shi, Saitama, 338-8570, Japan}
\email{m.ogawa.691@ms.saitama-u.ac.jp}
\date{}
\subjclass[2020]{57K41, 57R65.}

\usepackage{amsfonts,amsmath,amssymb,amscd}
\usepackage{amsthm}
\usepackage{latexsym}
\usepackage[dvipdfmx]{graphicx}
\usepackage{txfonts}
\usepackage{ulem}
\usepackage{color}
\usepackage{bm}

\usepackage{url}
\usepackage{psfrag}
\usepackage{color}

 
\theoremstyle{plain}
\newtheorem{theorem}{Theorem}[section]
\newtheorem{lemma}[theorem]{Lemma}
\newtheorem{corollary}[theorem]{Corollary}
\newtheorem{proposition}[theorem]{Proposition}

\newtheorem{maintheorem}{Theorem}

\theoremstyle{definition}
\newtheorem{definition}[theorem]{Definition}

\newtheorem{remark}[theorem]{Remark}

\theoremstyle{definition}
\newtheorem{claim}[theorem]{Claim}


\newcommand{\C}{\mathbb{C}}

\newcommand{\Int}{\mathrm{Int}}

\renewcommand{\P}{\mathcal{P}}
\newcommand{\D}{\mathcal{D}}
\newcommand{\T}{\mathcal{T}}
\newcommand{\disjoint}{\varnothing}

\definecolor{myred}{rgb}{.8,.0,.0}
\definecolor{mygreen}{rgb}{.0,.6,.0}
\definecolor{mygray}{gray}{0.7}

%


\newcounter{mystepcount}
\newenvironment{mystep}
{
\begin{list}{\it Step \arabic{mystepcount}.}
{
\usecounter{mystepcount}
\setlength{\topsep}{8pt}
\setlength{\itemindent}{0pt}
\setlength{\leftmargin}{47pt} %
\setlength{\rightmargin}{0pt} %
\setlength{\labelsep}{11pt} %
\setlength{\labelwidth}{33pt} %
\setlength{\itemsep}{2pt} %
\setlength{\parsep}{0pt} %
\setlength{\listparindent}{11pt} %
}
}{
\end{list}
}

\begin{document}
\begin{abstract}
Kirby and Thompson introduced a non-negative integer-valued invariant, called the Kirby-Thompson invariant, 
of a $4$-manifold using trisections. 
In this paper, we give some lower bounds for the Kirby-Thompson invariant of certain $4$-manifolds. 
As an application, we determine the Kirby-Thompson invariant of the spin of $L(2,1)$, 
which is the first example of a $4$-manifold with non-trivial Kirby-Thompson invariant. 
We also show that there exist $4$-manifolds with arbitrarily large Kirby-Thompson invariant. 
\end{abstract}
\maketitle
\section{Introduction}
\label{sec:Introduction}
A trisection is a decomposition of a compact connected oriented smooth $4$-manifold into three 1-handlebodies.
This is an analogue of a Heegaard splitting of a $3$-manifold.
The notion of a trisection was introduced by Gay and Kirby in \cite{GK}.
They showed that any closed connected oriented smooth $4$-manifold admits a trisection.
One of the features of a trisection is that it can be described by a trisection diagram, 
which is given as a $4$-tuple consisting of a closed surface and three cut systems 
(for more details, see Section~2). 
We can uniquely recover the trisection and the $4$-manifold from its trisection diagram up to stabilization and diffeomorphism, respectively, 
and thus we have obtained a new combinatorial description of $4$-manifolds. 

Kirby and Thompson introduced a non-negative integer-valued invariant of closed connected oriented smooth $4$-manifolds using trisection diagrams in \cite{KT}.
This invariant is called the Kirby-Thompson invariant, and it is denoted by $L_X$ for a $4$-manifold $X$.
Loosely speaking, $L_X$ measures how complicated a trisection diagram of $X$ must be. 
In \cite{KT}, Kirby and Thompson classified $4$-manifolds with $L_X=0$: 
they showed that $L_X=0$ if and only if $X$ is diffeomorphic to the connected sum of some copies of $S^4$, $S^1\times S^3$, $S^2\times S^2$, $\C P^2$ and $\overline{\C P^2}$. 
They also showed in the same paper that there exist no $4$-manifolds with $L_X=1$, 
and then the third author showed that there exist no $4$-manifolds with $L_X=2$ in~\cite{O}.
This invariant was extended to the case where $X$ is a $4$-manifold with boundary in \cite{rel}. 
It was also adapted to knotted surfaces using bridge trisections in \cite{Br}, see also \cite{APTZ,APZ}. 

The Kirby-Thompson invariant of a closed $4$-manifold $X$ is defined as the minimum number of a kind of ``length'' associated to a trisection diagram of $X$ (see Subsection~\ref{subsec:Kirby-Thompson_inv} for the precise definition).
It is difficult to determine the Kirby-Thompson invariant for a given $4$-manifold.
It is natural and important to seek lower bounds for the such invariant, which is just what we will do in this paper. 
Note that already lower bounds for the Kirby-Thompson invariants of $4$-manifolds with boundary and knotted surfaces 
have been studied for instance in \cite{APTZ,APZ,Br,rel}, 
but those results are entirely out of our setting, that is, the original case where $X$ is a closed $4$-manifold. 

To state our results, we recall the definition of {\it geometrically simply connected}. 
A smooth manifold $X$ is said to be geometrically simply connected 
if it admits a handle decomposition without $1$-handles. 
It is obvious that a geometrically simply connected manifold is simply connected. 
The first two theorems of ours are the following. 
\begin{maintheorem}
\label{thm:1}
Let $X$ be a closed connected oriented smooth $4$-manifold. 
If $X$ is not geometrically simply connected and does not contain $S^1\times S^3$ as a connected summand, 
then $L_X\geq4$. 
\end{maintheorem}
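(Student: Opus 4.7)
\textbf{Proof plan for Theorem~\ref{thm:1}.}

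Under the hypotheses, prior work already forces $L_X \geq 3$. Indeed, the Kirby--Thompson classification exhibits any closed $4$-manifold with $L_X = 0$ as a connected sum of $S^4$, $S^1 \times S^3$, $S^2 \times S^2$, $\CP$, and $\overline{\CP}$; every summand other than $S^1 \times S^3$ is geometrically simply connected, and geometric simple connectivity is preserved by connected sum, so the two hypotheses (no $S^1 \times S^3$ summand, not geometrically simply connected) rule out $L_X = 0$. The cases $L_X = 1$ and $L_X = 2$ are impossible for all closed $4$-manifolds by \cite{KT} and \cite{O}, respectively. The remaining content of the theorem is therefore to prove $L_X \neq 3$.

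The plan is to argue by contradiction: suppose $X$ admits a trisection diagram $\mathcal{T} = (\Sigma;\alpha,\beta,\gamma)$ realizing $L(\mathcal{T}) = 3$. Since $L(\mathcal{T})$ decomposes as a sum of three non-negative integer contributions, one for each unordered pair of cut systems, the unordered triple of contributions must be $(3,0,0)$, $(2,1,0)$, or $(1,1,1)$. In the cases with a zero summand, the pair of cut systems realizing that summand lies in a standard position on $\Sigma$; after an appropriate change of coordinates on $\Sigma$ this normalizes a large part of the diagram and reduces the remaining data to the short complementary path that carries the rest of the length.

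For each of these normalized configurations, the goal is to deduce one of two structural conclusions. Either one locates on $\Sigma$ a non-separating simple closed curve that bounds a compressing disk in each of the three handlebodies---a reducing sphere for the trisection, which produces an $S^1 \times S^3$ summand of $X$ and contradicts the hypothesis---or one simplifies the diagram via handleslides and isotopies to one from which a handle decomposition of $X$ without $1$-handles is manifestly readable, contradicting that $X$ is not geometrically simply connected. This dichotomy precisely mirrors the dichotomy admitted by the theorem, which is what allows us to close the argument rather than merely reduce the problem.

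The main obstacle will be the $(1,1,1)$ case, in which no pair of cut systems is standardly positioned and the normalization driving the other two cases is unavailable. Here one must simultaneously enumerate the length-one configurations on each of the three pairs of cut systems, and verify in every joint configuration that either a reducing sphere or a $1$-handle-free simplification appears. This combinatorial enumeration---which uses the precise definition of the length from Subsection~\ref{subsec:Kirby-Thompson_inv}, not only its numerical value---is the heart of the argument; the $(3,0,0)$ and $(2,1,0)$ cases should follow from the same toolkit applied to the one or two non-trivial paths that survive after normalization.
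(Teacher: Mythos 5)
Your overall shape is right: argue by contradiction at $L_X=3$, and in each configuration deduce either an $S^1\times S^3$ connected summand (contradicting one hypothesis) or a Kirby diagram with no dotted circles (contradicting the other). This dichotomy is exactly how the paper proceeds, and you correctly identify the $(1,1,1)$ distribution as the hard case. Two points of divergence, one minor and one more serious, are worth flagging.

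The minor point concerns the zero-summand cases. You propose to normalize and then ``reduce the remaining data to the short complementary path that carries the rest of the length,'' i.e.\ to still engage with the surviving path(s). The paper is more efficient: Proposition~\ref{prop:l_alpha=0} shows that \emph{whenever} some $l_\alpha(\lambda)=0$, the diagram already forces $X$ to be geometrically simply connected or to contain $S^1\times S^3$, regardless of what the other two subpaths look like. So the $(3,0,0)$ and $(2,1,0)$ cases are disposed of outright with no analysis of the complementary path, and the theorem immediately reduces to $l_\alpha=l_\beta=l_\gamma=1$.

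The more serious gap is in your description of the $(1,1,1)$ case as ``combinatorial enumeration.'' After the initial enumeration of intersection patterns among $\alpha_i,\beta_i,\gamma_i$ (Lemmas~\ref{lemma:7} and~\ref{lemma:8}), the paper needs several ingredients that are not combinatorial in nature and that your plan does not anticipate. First, a destabilization step removes the indices $i$ where the curve pattern is a stabilized standard piece while preserving a realization loop of the same lengths. Second, a homological argument via Lemma~\ref{lem:0edge_change} constrains $\langle\alpha_g,\gamma_i\rangle$ to $\{-1,0,1\}$, and a Kirby-diagram canceling-pair argument (Lemma~\ref{lem:epsilon_g=1}) then forces $\langle\alpha_g,\gamma_i\rangle=0$ for all $i<g$. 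Third---and this is the crux---the paper introduces \emph{wave-moves} on subarcs of the $\gamma$-curves, using the surfaces $\tilde\Sigma$ bounded by $\beta_i\sqcup(b\cup c)$, to iteratively reduce $|\alpha_g\cap\gamma_i|$ until $\alpha_g$ (and $\beta_g$) becomes disjoint from all of $\gamma$, which produces an $S^1\times S^3$ summand. Without identifying something like the wave-move mechanism, a purely combinatorial enumeration of intersection patterns of the six cut systems would not terminate: there is no bound on the geometric intersection numbers, so ``enumeration'' is not even finite, and the step that collapses the situation is a genuine surface-topology argument, not a list check. That is the missing idea your plan would need to reach a proof.
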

\begin{maintheorem}
\label{thm:2}
Let $X$ be a closed connected oriented smooth $4$-manifold. 
Suppose that $X$ does not contain $S^1\times S^3$ as a connected summand. 
If $\pi_1(X)$ is not trivial nor infinite cyclic, 
then $L_X\geq6$. 
\end{maintheorem}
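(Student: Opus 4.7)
The plan is to reduce to the lower bound $L_X\geq 4$ already supplied by Theorem~\ref{thm:1}, and then to exclude the cases $L_X=4$ and $L_X=5$ by analyzing the possible trisection diagrams that realize such a small length. I would begin by observing that the hypotheses of Theorem~\ref{thm:2} imply those of Theorem~\ref{thm:1}: if $\pi_1(X)$ is non-trivial, then $X$ cannot be geometrically simply connected, since any handle decomposition without $1$-handles builds a simply connected space. Combined with the assumption that $X$ has no $S^1\times S^3$ summand, Theorem~\ref{thm:1} gives $L_X\geq 4$, and the remaining task is to rule out $L_X\in\{4,5\}$.

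Next, assume for contradiction that $L_X\leq 5$ and fix a trisection diagram $(\Sigma,\alpha,\beta,\gamma)$ attaining the minimum. The Kirby--Thompson length decomposes as a sum of three non-negative integer contributions, one from each unordered pair of cut systems, each measuring how far the pair is from a standard reduced position. A total at most $5$ forces each pairwise contribution to be very small, so after suitable handle slides one can normalize the diagram so that a large common subsystem is shared by all three cut systems and only a few deviating curves remain.

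From this normalized diagram I would then read off a presentation of $\pi_1(X)$, whose generators correspond to the deviating curves in one cut system and whose relators correspond to those in another. The absence of an $S^1\times S^3$ summand prevents any generator from being disjoint from every relator (such a generator would correspond to a free $1$-handle surviving in a handle decomposition, producing an $S^1\times S^3$ summand). With only a small bounded number of generators and relators available, the only fundamental groups that can arise are trivial or infinite cyclic, yielding the desired contradiction and hence $L_X\geq 6$.

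The main obstacle will be making the case analysis for $L_X=5$ genuinely complete. An odd length breaks the symmetry among the three pairwise contributions and admits several borderline configurations, including diagrams where a deviating curve meets a common-subsystem curve transversely exactly once; in each such sub-case one must verify, via handle calculus on the trisection diagram, that the intersection either cancels curves and effectively destabilizes the diagram, or forces an $S^1\times S^3$ summand, rather than producing a genuinely non-cyclic fundamental group. This enumeration, and ensuring that no borderline sub-case slips through, is where the bulk of the technical work will lie.
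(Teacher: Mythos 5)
Your broad strategy---assume $L_X\leq5$, normalize the diagram, read off a small presentation of $\pi_1(X)$, and use the ``no $S^1\times S^3$ summand'' hypothesis to rule out free generators---is in the right family of ideas, and your opening remark that a non-trivial $\pi_1$ rules out geometric simple connectivity is correct and useful. However, there is a genuine gap between your sketch and a working proof, concentrated in two places.

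First, the length does not decompose ``one from each unordered pair of cut systems.'' It is $L_{X,\mathcal{T}}=l_\alpha(\lambda)+l_\beta(\lambda)+l_\gamma(\lambda)$, where $l_\alpha(\lambda)$ is the number of type-$0$ edges of the realization loop inside the single subgraph $\Gamma_\alpha$, and similarly for $\beta,\gamma$. The decisive reduction you are missing is elementary but essential: if $L_X\leq5$, then $\min\{l_\alpha(\lambda),l_\beta(\lambda),l_\gamma(\lambda)\}\leq1$. One does not try to control all three simultaneously, nor to normalize the whole diagram into ``a large common subsystem with a few deviating curves''---the genus $g$ and the other two $l$'s can be large. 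Instead one fixes attention on the one cut system, say $\alpha$, whose contribution is $0$ or $1$, and analyzes the associated Kirby diagram. This is exactly the analysis already set up in Section~\ref{sec:l_alpha=0and1}: Proposition~\ref{prop:l_alpha=0} handles $l_\alpha=0$, and Lemmas~\ref{lem:1}--\ref{lem:5} handle $l_\alpha=1$.

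Second, and more seriously, your final step (``with only a small bounded number of generators and relators available, the only fundamental groups that can arise are trivial or infinite cyclic'') does not follow from the hypotheses you have accumulated. The number of relators is governed by $g$, which is unbounded, and nothing you have said caps the number of generators at one, or controls the exponents in the relators. The mechanism that actually closes the argument is homological: after cancelling the independently cancelling pairs in the Kirby diagram, the surviving dotted circle is the single curve $L^1_g$, so the presentation has exactly \emph{one} generator; and Lemma~\ref{lem:0edge_change} shows that $\langle\alpha_g,\gamma_i\rangle\in\{-1,0,1\}$ for every $i$, so every relator is $\alpha_g^{\pm1}$ or trivial. That is what forces $\pi_1(X)$ to be trivial or $\mathbb{Z}$. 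Your ``generator disjoint from every relator would give an $S^1\times S^3$ summand'' observation is one ingredient, but on its own it rules out free summands without giving cyclicity, and you never establish that the generating set has been reduced to a single element.

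So the missing ideas are: (i) the pigeonhole reduction $L_X\leq5\Rightarrow$ one of $l_\alpha,l_\beta,l_\gamma$ is at most $1$, which localizes the argument to a single cut system and makes the case analysis finite and tractable, and (ii) the intersection-number argument via Lemma~\ref{lem:0edge_change} that yields a one-generator presentation with relators of exponent in $\{-1,0,1\}$. Without both, the appeal to ``small bounded number of generators and relators'' is unsupported. Note also that your initial invocation of Theorem~\ref{thm:1} to first get $L_X\geq4$ is harmless but not actually needed: the cases $l_\alpha\in\{0,1\}$ are handled directly by the same lemmas regardless of whether $L_X$ is $4$, $5$, or lower.
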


As a consequence of Theorem~\ref{thm:2}, 
we determine the Kirby-Thompson invariant of the spin of $L(2,1)$ in Corollary~\ref{cor:S(L(2,1))}, which is exactly $6$. 
This is the first example of a $4$-manifold of which
Kirby-Thompson invariant is determined in the case $L_X>0$. 

In either of the above two threorems, we make the assumption that $X$ does not contain $S^1\times S^3$ as a connected summand. 
As pointed out in \cite{KT}, the Kirby-Thompson invariant is subadditive on connected sums, that is, 
the inequality $L_{X\# X'}\leq L_{X}+L_{X'}$ holds in general. 
If the equality holds for $X'=S^1\times S^3$, we can remove the assumption on connected sum from the theorems. 
Therefore, it would be interesting to ask whether there exists a closed connected oriented smooth $4$-manifold $X$ such that $L_{X\# (S^1\times S^3)}<L_{X}$. 

While the lower bounds in Theorems~1 and 2 are given as constants, 
we give a non-constant lower bound for $L_X$ in Theorem~3. 
In particular, we show that there exist $4$-manifolds with arbitrarily large Kirby-Thompson invariant.
\begin{maintheorem}
\label{thm:3}
There exists a universal positive constant $C$ such that 
\[
L_X\geq C\sqrt{\,\log |H_1(X)|\,} 
\]
for any closed connected oriented smooth $4$-manifold $X$ with $2\leq|H_1(X)|<\infty$.
\end{maintheorem}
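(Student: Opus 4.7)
\textbf{Proof plan for Theorem~\ref{thm:3}.}
The plan is to show that $|H_1(X)|$ grows at most like an exponential of $L_X^{\,2}$, and then invert this inequality to obtain the $\sqrt{\log}$ lower bound on $L_X$. Because $L_X$ is the minimum of an explicit length function over all trisection diagrams of $X$, I would fix a trisection diagram $\D = (\Sigma_g, \alpha, \beta, \gamma)$ of $X$ with $L(\D) = L_X$ and work directly with the geometric data it provides.

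The first ingredient is the standard presentation
\[
H_1(X;\Z) \cong H_1(\Sigma_g;\Z) \big/ \langle [\alpha_1],\ldots,[\alpha_g],[\beta_1],\ldots,[\beta_g],[\gamma_1],\ldots,[\gamma_g]\rangle,
\]
read off from the trisection. After picking a symplectic basis of $H_1(\Sigma_g;\Z)$ in which the classes $[\alpha_i]$ account for half of the basis, $H_1(X;\Z)$ becomes the cokernel of a $g\times 2g$ integer matrix $M$ whose entries are, up to sign, algebraic intersection numbers of $\beta$- and $\gamma$-curves with a fixed dual system for the $\alpha$-curves. Each such entry is bounded in absolute value by the corresponding geometric intersection number in $\Sigma_g$.

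The key quantitative step is to bound, in terms of $L_X$, both the genus $g$ and the geometric intersection numbers appearing in $M$. Unpacking the definition of the Kirby-Thompson length as a sum of lengths of paths in a cut-system (handle-slide) graph toward ``standard'' Heegaard pairs, and using the elementary fact that a single handle slide at most doubles any geometric intersection number, I would expect every entry of $M$ to satisfy $|M_{ij}| \leq 2^{O(L_X)}$. For the genus, one would argue that a minimizing diagram can be taken with $g \leq O(L_X)$; alternatively, superfluous stabilizations contribute only $\pm 1$ diagonal blocks to $M$ and can be absorbed without changing the order of the cokernel. This genus control is where I expect the main obstacle to lie, since it must be shown either that the infimum is attained on bounded-genus diagrams or that the trivial blocks from stabilizations do not inflate the determinant bound.

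Given these bounds, Hadamard's inequality applied to any $g\times g$ full-rank submatrix $M_J$ of $M$ yields
\[
|H_1(X)| \leq |\det M_J| \leq g^{g/2} N^{g}, \qquad N := \max_{i,j}|M_{ij}| \leq 2^{O(L_X)},
\]
and hence
\[
\log|H_1(X)| \leq g\log N + \tfrac{g}{2}\log g = O(L_X^{\,2}).
\]
Rearranging and using $|H_1(X)| \geq 2$ produces a universal constant $C>0$ with $L_X \geq C\sqrt{\log|H_1(X)|}$. Once the dictionary between the Kirby-Thompson length and intersection numbers is set up, only the determinant estimate remains; the subtle point is ensuring that the minimizing diagram has genus $O(L_X)$, or equivalently that stabilizations cannot be used to evade the bound.
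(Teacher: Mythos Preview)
Your overall strategy --- bound the presentation matrix of $H_1(X)$ entrywise and then apply Hadamard's inequality --- is exactly the paper's strategy, and your identification of the obstacle is also correct: the missing ingredient is control over the size of the determinant in terms of $L_X$ alone, independent of the genus $g$ (and of $k_1,k_2,k_3$). Neither of the two routes you propose for this actually goes through. There is no reason a minimising diagram should have $g\le O(L_X)$, and the ``stabilisation blocks'' heuristic does not apply: the diagram realising $L_X$ need not differ from a low--genus one by stabilisations, so you cannot simply strip off $\pm 1$ diagonal blocks. With only the uniform entry bound $|M_{ij}|\le 2^{O(L_X)}$, Hadamard gives $|\det|\le g^{g/2}2^{O(gL_X)}$, and without a bound on $g$ this says nothing.

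The paper resolves this without ever bounding $g$. Set $m=l_\alpha(\lambda)=\min\{l_\alpha,l_\beta,l_\gamma\}\le L_X/3$, choose $\alpha=\alpha^{(m)}$ good for $\beta$ and follow the $\Gamma_\alpha$--path to $\alpha^{(0)}$ good for $\gamma$. The intersection matrix $I^{(h)}=(\langle\alpha_i^{(h)},\gamma_j\rangle)$ then changes in \emph{at most one row per step} (Lemma~\ref{lem:0edge_change}), so at most $m$ rows of $I^{(m)}$ differ from those of $I^{(0)}$, and the remaining rows are standard unit vectors by goodness of $(\alpha^{(0)},\gamma)$. Moreover the changed rows carry a row--dependent bound $|\langle\alpha_i^{(m)},\gamma_j\rangle|\le 2^{\rho_m(i)-1}$ recording \emph{when} that row last moved, not merely the uniform $2^{O(L_X)}$. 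In the Hadamard estimate on a full--rank $k\times k$ minor, the unit--vector rows contribute $1$ and the at most $m$ modified rows contribute $\prod 2^{\rho_m(i)-1}\sqrt{m}\le 2^{m(m-1)/2}m^{m/2}$; this yields $\log p=O(m^2)$ with no dependence on $g$ or $k$. Your plan becomes a proof once you replace ``bound $g$'' by ``only $m$ rows are nontrivial'', and refine the uniform entry bound to the per--row bound coming from Lemma~\ref{lem:0edge_change}.
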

This paper is organized as follows. 
In Section~\ref{sec:Preliminaries}, 
we review the basic notions of a trisection and a trisection diagram. 
We also demonstrate how to draw a Kirby diagram from a trisection diagram, 
and then we recall the definition of the Kirby-Thompson invariant.
We prove Theorem~\ref{thm:1} in Sections~\ref{sec:l_alpha=0and1} and \ref{sec:Thm1}. 
In Section~\ref{sec:Thm2}, we prove Theorem~\ref{thm:2} by using techniques developed in Sections~\ref{sec:l_alpha=0and1} and \ref{sec:Thm1}.
Finally, we show Theorem~\ref{thm:3} in Section~\ref{sec:Thm3}.

The second author is supported by JSPS KAKENHI Grant Number JP20K14316, 
and the third author is supported by JSPS KAKENHI Grant Number JP20J20545. 
\section{Preliminaries}
\label{sec:Preliminaries}
Throughout this paper, we work in the smooth category unless otherwise stated.

We will write the connected sum of $k$ copies of a closed $n$-manifold $X$ as $\#^k X$, 
and also we write the boundary connected sum of $k$ copies of an $n$-manifold $M$ with connected boundary as $\natural^k M$. 
We adopt the conventions $\#^0 X=S^n$ and $\natural^0 M=B^n$. 
For two triangulable spaces $A\subset B$, a regular neighborhood of $A$ in $B$ will be denoted by $N(A;B)$.

For an oriented surface $\Sigma$, 
the algebraic intersection number of two oriented simple closed curves $\gamma$ and $\gamma'$ in $\Sigma$ will be denoted by $\langle\gamma,\gamma'\rangle$. 

\subsection{A trisection of a $4$-manifold}
\label{subsec:Trisection}
We start with the definition of a trisection of a closed connected oriented $4$-manifold. 
Trisections were firstly introduced by Gay and Kirby in \cite{GK}. It is a decomposition of a $4$-manifold into three 1-handlebodies. 

 \begin{definition}
\label{def:trisection}
Let $g,k_1,k_2$ and $k_3$ be non-negative integers with $\max\{k_1,k_2,k_3\}\leq g$. 
A $(g; k_1, k_2, k_3)$-{\it trisection} (or simply a {\it trisection}) of a closed connected oriented $4$-manifold $X$ 
is a decomposition $X=X_1\cup X_2\cup X_3$ such that for $i,j\in\{1,2,3\}$, 
\begin{itemize}
 \item
$X_i\cong \natural^{k_i}(S^1\times B^3)$, 
 \item
$X_i\cap X_j\cong \natural^g (S^1\times B^2)$ if $i\neq j$, and 
 \item
$\Sigma=X_1\cap X_2\cap X_3 \cong \#^g (S^1\times S^1)$. 
\end{itemize}
\end{definition}
In \cite{GK}, Gay and Kirby showed that any closed connected oriented $4$-manifold admits a trisection. 

The surface $\Sigma=X_1\cap X_2\cap X_3$ is canonically equipped with an orientation in the following way. 
The normal bundle of $\Sigma$ in $X$ is a trivial $B^2$-bundle 
since $\Sigma$ is orientable and null-homologous. 
By definition, this bundle $\Sigma\times B^2$ is also decomposed into three pieces $(\Sigma\times B^2)\cap X_1$, $(\Sigma\times B^2)\cap X_2$ and $(\Sigma\times B^2)\cap X_3$. 
Especially, the boundary of a fiber $\{\text{pt.}\}\times B^2$ is decomposed into three arcs going through $X_1, X_2$ and $X_3$, 
and then it induces an orientation of the fiber $B^2$ with the order $X_1, X_2, X_3$. 
Thus, an orientation of $\Sigma$ is also determined from those of $X$ and $B^2$. 
This oriented surface $\Sigma$ is called the {\it trisection surface} of the trisection $X=X_1\cup X_2\cup X_3$. 

We next explain a trisection diagram. 
A {\it cut system} of a genus $g$ closed surface $\Sigma$ is an ordered tuple $\alpha=(\alpha_1,\ldots,\alpha_g)$ of 
mutually disjoint $g$ simple closed curves $\alpha_1,\ldots,\alpha_g$ such that $\Sigma\setminus\bigcup_{i=1}^g\alpha_i$ is a sphere with $2g$ punctures. 
We also define a {\it cut system} of a $3$-dimensional handlebody $H$ of genus $g$ as a cut system of $\partial H$ 
if there exists a properly embedded disk $D_i$ in $H$ with $\partial D_i=\alpha_i$ for each $i\in\{1,\ldots,g\}$ such that $H\setminus \Int\, N(D_1\cup\cdots\cup D_g;H)$ is a $3$-ball.
Note that a cut system for any closed surface always exists, and so does that for any handlebody. 

Let $X_1\cup X_2\cup X_3$ be a $(g; k_1, k_2, k_3)$-trisection of $X$ 
and $\Sigma$ the trisection surface. 
As in Definition~\ref{def:trisection}, the surface $\Sigma$ bounds three $1$-handlebodies $X_1\cap X_2$, $X_2\cap X_3$ and $X_3\cap X_1$. 
Then there exist cut systems $\alpha, \beta$ and $\gamma$ on the trisection surface $\Sigma$ for $X_1\cap X_2$, $X_2\cap X_3$ and $X_3\cap X_1$, respectively. 
They consists of $3g$ circles drawn on $\Sigma$, 
and the $4$-tuple $(\Sigma; \alpha, \beta, \gamma)$ is called a {\it trisection diagram}. 
By the definition of a trisection, $(\Sigma; \alpha, \beta)$, $(\Sigma; \beta, \gamma)$ and $(\Sigma; \gamma, \alpha)$ 
are genus $g$ Heegaard diagrams of $\#^{k_1}(S^1\times S^2)$, $\#^{k_2}(S^1\times S^2)$ and $\#^{k_3}(S^1\times S^2)$, respectively. 

Given a trisection diagram $\T=(\Sigma; \alpha, \beta, \gamma)$, we can recover the corresponding $4$-manifold as follows (see \cite{GK} for the details). 
We first consider the product $\Sigma\times B^2$ of $\Sigma$ and $B^2$, 
where $B^2$ is the unit disk in $\mathbb{C}$. 
Next we attach $g$ $2$-handles along $\alpha\times\{1\}$ with the surface framing of $\Sigma$, 
and then we attach $2g$ $2$-handles along $\beta\times\{e^{2\pi\sqrt{-1}/3}\}$ and $\gamma\times\{e^{4\pi\sqrt{-1}/3}\}$ as well. 
Notice that the boundary of the obtained manifold is diffeomorphic to the connected sum of some copies of $S^1\times S^2$. 
Then we can make a closed $4$-manifold in a canonical way by \cite{LP72}, 
which is what we want to construct. 

\subsection{Kirby diagram obtained from a trisection diagram}
\label{subsec:Kirby_diag}
Here we explain how to draw a Kirby diagram from a trisection diagram. 
We begin with introducing notations that will be used throughout this paper. 
\begin{definition}
Let $c$ and $c'$ be simple closed curves in an orientable surface $\Sigma$. 
If $c$ and $c'$ are isotopic to each other, then we write $c\P c'$. 
If $c$ and $c'$ intersect transversely at exactly one point, then we write $c\D c'$. 
\end{definition}

Let $\T=(\Sigma;\alpha,\beta,\gamma)$ be a $(g;k_1,k_2,k_3)$-trisection diagram for a closed connected oriented $4$-manifold $X$. 
Suppose that there exists a permutation $\sigma$ of $\{1,\ldots,g\}$ 
such that either $\alpha_i\P\beta_{\sigma(i)}$ or $\alpha_i\D\beta_{\sigma(i)}$ holds for each $i\in\{1,\ldots,g\}$. 
Note that any closed connected oriented $4$-manifold admits such a diagram. 
Then we reorder the curves of $\alpha$ and $\beta$ so that 
$\alpha_i\P\beta_i$ for $i\in\{1,\ldots,k_1\}$ and also that 
$\alpha_i\D\beta_i$ for $i\in\{k_1+1,\ldots,g\}$ for simplicity. 
A Kirby diagram for $X$ is obtained from $(\Sigma;\alpha,\beta,\gamma)$ according to the following steps. 
\begin{figure}[t]
\includegraphics[width=80mm, bb = 0 0 220 89]{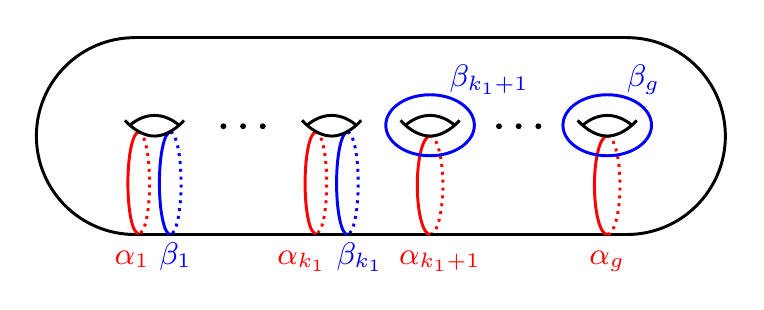}
\caption{The positions of curves of $\alpha$ and $\beta$ in $\Sigma\subset S^3$.}
\label{fig:std_pos}
\end{figure}
\begin{mystep}
\item
We first embed $\Sigma$ in $S^3$ so that the curves of $\alpha$ and $\beta$ lie 
as shown in figure~\ref{fig:std_pos}. 
\item
Let $H_-$ and $H_+$ be two handlebodies in $S^3$ such that $H_-\cap H_+=\Sigma$, $H_-\cup H_+=S^3$ and 
$\alpha$ is a cut system for $H_-$. 
\item
For $i\in\{1,\ldots,k_1\}$, 
let $L^1_{i}$ be a curve obtained from $\alpha_i$ by slightly pushing it into $H_+$, 
and we decorate the curve $L^1_{i}$ with a ``dot''. 
\item
The curves $\gamma_1,\ldots,\gamma_g$ are naturally embedded in $S^3$ together with $\Sigma$. 
Then, for $j\in\{1,\ldots,g\}$, 
let $L^2_{j}$ be the framed knot that is 
the curve $\gamma_j$ equipped with the surface framing of $\Sigma$. 
\end{mystep}
We thus obtain a $(g+k_1)$-component link $L=L_1\sqcup L_2$, 
where $L_1=L^1_{1}\sqcup\cdots\sqcup L^1_{k_1}$ and $L_2=L^2_{1}\sqcup\cdots\sqcup L^2_{g}$. 
This link $L$ is a Kirby diagram representing $X$ (see \cite{GK}), 
and we call this diagram a {\it Kirby diagram obtained from }$(\Sigma;\alpha,\beta,\gamma)$. 
See Figure~\ref{fig:Kirby_diag} for an example, which shows a Kirby diagram for the spin $\mathcal{S}(L(2,1))$ of $L(2,1)$ (see Subsection~\ref{subsec:spin} for the definition of the spin). 
\begin{figure}[t]
\centering
\includegraphics[width=110mm]{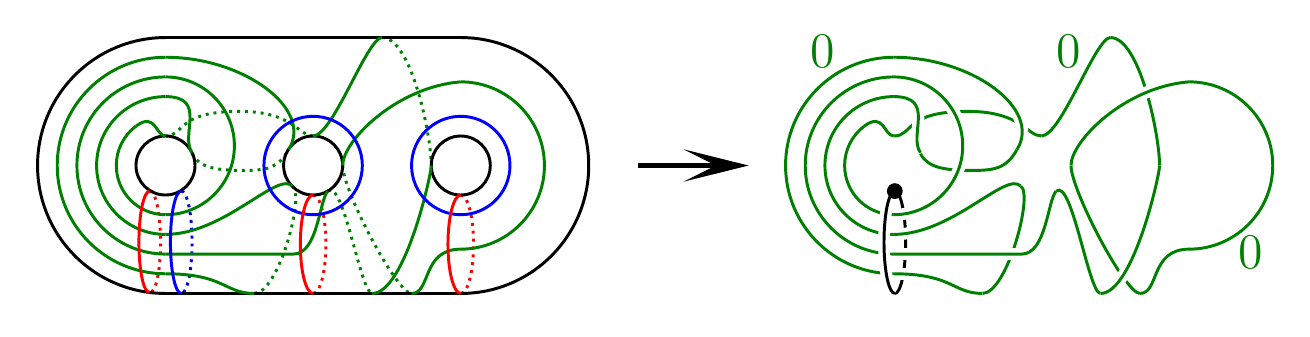}
\caption{An example of Kirby diagram obtained from a trisection diagram of 
$\mathcal{S}(L(2,1))$. }
\label{fig:Kirby_diag}
\end{figure}
\begin{remark}
\label{rmk:nonunique_KD}
A Kirby diagram obtained from $(\Sigma;\alpha,\beta,\gamma)$ 
is not uniquely determined only by the trisection diagram $(\Sigma;\alpha,\beta,\gamma)$. 
Actually, if we choose another embedding of $\Sigma$ in Step~1, 
$L_2$ might change to another framed link $L'_2$. 
However, these $L_2$ and $L'_2$ can be easily related by Kirby moves. 
\end{remark}
\begin{remark}
\label{rmk:permutate_KD}
For a permutation $\tau$ of $\{\alpha,\beta,\gamma\}$, 
a Kirby diagram obtained from $(\mathrm{sgn}(\tau)\Sigma;\tau(\alpha),\tau(\beta),\tau(\gamma))$ 
also describes the same $4$-manifold $X$. 
\end{remark}

If a dotted circle $L^1_{i}$ is a meridian of a framed knot $L^2_{j}$ 
in a Kirby diagram $L=L_1\sqcup L_2$, 
the pair of $L^1_{i}$ and $L^2_{j}$ is a canceling pair, 
and we can reduce the components of the diagram. 
Especially, if there exists a $2$-disk bounded by $L^1_{i}$ 
such that it does not intersect components of $L_2$ except for $L^2_{j}$, 
then the handle cancellation of this pair is done just 
by removing $L^1_{i}$ and $L^2_{j}$ from the diagram without changing the other components. 
We call such a pair an {\it independently canceling pair}. 
If $\alpha_i$ does not intersect curves of $\gamma$ except for $\gamma_j$, 
the pair of $L^1_{i}$ and $L^2_{j}$ is an independently canceling pair. 

\subsection{The Kirby-Thompson invariant of a $4$-manifold}
\label{subsec:Kirby-Thompson_inv}
In this subsection, we review the definition of 
the length of a trisection and the Kirby-Thompson invariant of a closed $4$-manifold. 

\begin{definition}
Let $\Sigma$ be a closed oriented surface of genus $g$. 
The {\it cut complex} $\mathcal{C}$ of $\Sigma$ is a $1$-complex defined as follows:
\begin{itemize}
 \item the vertex set of $\mathcal{C}$ is the set of all the isotopy classes of cut systems of $\Sigma$, and 
 \item two vertices $\alpha,\alpha'\in\Gamma$ are connected by an edge if either one of the following holds:
\begin{itemize}
 \item after reordering, $\alpha_i\P\alpha'_i$ for $i\in\{1,\ldots,g-1\}$, and $\alpha_g$ and $\alpha'_g$ are not isotopic to each other and does not intersect, or 
 \item after reordering, $\alpha_i\P\alpha'_i$ for $i\in\{1,\ldots,g-1\}$, and $\alpha_g\D \alpha'_g$. 
\end{itemize}
\end{itemize}
The edges in the former case are said to be of {\it type-$0$}, and those in the latter case are of {\it type-$1$}. 
\end{definition}
Suppose that $\alpha$ and $\alpha'$ are connected by one type-$0$ edge and that $\alpha_i\P\alpha'_i$ for $i\in\{1,\ldots,g-1\}$. 
Then we will write $\alpha_g\disjoint\alpha'_g$. Note that, in this case, $\alpha_g$ is not isotopic to any curves of $\alpha'$, and $\alpha'_g$ is not isotopic to any curves of $\alpha$. 

We use the symbol $\Gamma_\alpha$ 
to denote the connected component containing $\alpha$ of $\Gamma$ with all the type-1 edges removed. 
We also define $\Gamma_\beta$ and $\Gamma_\gamma$ similarly. 
Note that our notations are slightly different from that of \cite{KT}. 
\begin{remark}
\label{rmk:permutate_CS}
Any two cut systems connected by a type-0 edge can be related by some handle slides. 
Therefore, for any triple $(\alpha',\beta',\gamma')\in\Gamma_\alpha\times\Gamma_\beta\times\Gamma_\gamma$, 
the diagram $(\Sigma;\alpha',\beta',\gamma')$ also represents the same $4$-manifold as $(\Sigma;\alpha,\beta,\gamma)$. 
\end{remark}
We then define a relation between two cut systems belonging to different 
subgraphs $\Gamma_\alpha,\Gamma_\beta,\Gamma_\gamma$. 
\begin{definition}
We say that two cut systems $\alpha$ and $\beta$ are 
{\it good} if we can reorder them so that 
\begin{itemize}
\item
either $\alpha_i\P\beta_i$ or $\alpha_i\D\beta_i$ holds for each $i\in\{1,\ldots,g\}$, and 
\item
$\alpha_i$ and $\beta_j$ do not intersect when $i\ne j$. 
\end{itemize}
We also say that the pair $(\alpha,\beta)$ is {\it good} as well. 
\end{definition}

Let $X$ be a closed connected oriented $4$-manifold and 
$\T=(\Sigma;\alpha,\beta,\gamma)$ a trisection diagram of $X$. 
We here define a {\it loop} in $\mathcal{C}$ as a closed path $\lambda$ in $\mathcal{C}$ satisfying the following three conditions:
\begin{itemize}
\item 
$\lambda$ intersects $\Gamma_\alpha$, $\Gamma_\beta$, and $\Gamma_\gamma$, 
 \item 
there exist vertices 
\[
\alpha_\beta, \alpha_\gamma \in\Gamma_\alpha\cap\lambda, 
\quad 
\beta_\gamma, \beta_\alpha \in\Gamma_\beta\cap\lambda,
\quad 
\gamma_\alpha, \gamma_\beta \in\Gamma_\gamma\cap\lambda
\]
such that the pairs $(\alpha_\beta, \beta_\alpha), (\beta_\gamma, \gamma_\beta)$ and $(\gamma_\alpha, \alpha_\gamma)$ are all good, and 
\item 
the intersection $\Gamma_\alpha\cap\lambda$ (resp. $\Gamma_\beta\cap\lambda$, $\Gamma_\gamma\cap\lambda$) 
is a connected subpath between $\alpha_\beta$ and $\alpha_\gamma$ 
(resp. $\beta_\alpha$ and $\beta_\gamma$, $\gamma_\alpha$ and $\gamma_\beta$). 
\end{itemize}
Note that the pairs $(\alpha_\beta, \beta_\alpha), (\beta_\gamma, \gamma_\beta)$ and $(\gamma_\alpha, \alpha_\gamma)$ 
are connected by $g-k_1$, $g-k_2$ and $g-k_3$ type-1 edges, respectively. 

Let $l_{X, \T}$ denote the minimal length of any loop in $\mathcal{C}$, 
and a loop with length $l_{X, \T}$ is called a {\it realization loop} of $\T$. 
Let us fix a realization loop $\lambda$, and let $l_\alpha(\lambda)$ (resp. $l_\beta(\lambda)$ and $l_\gamma(\lambda)$) 
denote the number of (type-0) edges in $\Gamma_\alpha\cap\lambda$ (resp. $\Gamma_\beta\cap\lambda$ and $\Gamma_\gamma\cap\lambda$). 
Then we set 
\[
L_{X, \T}=l_\alpha(\lambda)+l_\beta(\lambda)+l_\gamma(\lambda)=l_{X, \T}-3g+k_1+k_2+k_3, 
\]
and let $L_X$ denote the minimum number of $L_{X, \T}$ 
over all trisection diagrams $\T$ of $X$.
We call $L_X$ the {\it Kirby-Thompson invariant}.
It was introduced by Kirby and Thompson in \cite{KT}, and it is obviously an invariant of closed connected oriented $4$-manifolds. 

We close this section by presenting a lemma giving a homological relation between two cut systems connected by one type-0 edge. 
It plays a key role in this paper, although the proof is easy. 
\begin{lemma}
\label{lem:0edge_change}
Let $\alpha=(\alpha_1,\ldots,\alpha_g)$ and $\alpha'=(\alpha'_1,\ldots,\alpha'_g)$ 
be two cut systems for $\Sigma$ connected by one type-$0$ edge in $\mathcal{C}$, 
and orient each curve of them arbitrarily. 
Suppose that $\alpha_i\P\alpha'_i$ for $i\in\{1,\ldots,g-1\}$ 
and $\alpha_g\disjoint\alpha'_g$. 
Then there exist $\varepsilon_1,\ldots,\varepsilon_{g-1}\in\{-1,0,1\}$ and $\varepsilon_g\in\{-1,1\}$ such that $\displaystyle [\alpha'_g]=\sum_{i=1}^{g}\varepsilon_i[\alpha_i]$ in $H_1(\Sigma)$. 

\end{lemma}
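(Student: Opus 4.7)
The plan is to read off the homology class of $\alpha'_g$ from the way it separates the $2g$-holed sphere obtained by cutting $\Sigma$ along the cut system $\alpha$, and then to use the cut-system property of $\alpha'$ to pin down the last coefficient.

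First I would arrange that $\alpha'_g$ lies entirely in the complement $S:=\Sigma\setminus\bigcup_{i=1}^{g}\alpha_i$. By hypothesis $\alpha'_g$ is already disjoint from $\alpha_g$, and for $i<g$ the geometric intersection number $i(\alpha'_g,\alpha_i)$ equals $i(\alpha'_g,\alpha'_i)=0$ because $\alpha_i\P\alpha'_i$ and $\alpha'_i,\alpha'_g$ belong to the same cut system. The bigon criterion then lets me push $\alpha'_g$ off the disjoint family $\alpha_1\cup\cdots\cup\alpha_g$ simultaneously.

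The cut-system condition makes $S$ a sphere with $2g$ boundary circles, which I label $\alpha_i^{+},\alpha_i^{-}$ for the two sides of each $\alpha_i$. Since every simple closed curve in a planar surface separates, $\alpha'_g$ bounds a subsurface $S_1\subset S$ whose boundary is $\alpha'_g$ together with some subset $T$ of the $2g$ circles. Because $S_1$ is a compact oriented $2$-manifold, $[\partial S_1]=0$ in $H_1(\Sigma)$, and hence $[\alpha'_g]=\pm\sum_{c\in T}[c]$. In $\Sigma$ the two copies $\alpha_i^{+},\alpha_i^{-}$ represent $\pm[\alpha_i]$ with opposite signs, so for each $i$ the total contribution to $[\alpha'_g]$ is either $[\alpha_i]-[\alpha_i]=0$ (if both sides lie in $T$), zero (if neither does), or $\pm[\alpha_i]$ (if exactly one does). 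This already yields $[\alpha'_g]=\sum_{i=1}^{g}\varepsilon_i[\alpha_i]$ with $\varepsilon_i\in\{-1,0,1\}$.

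To upgrade $\varepsilon_g$ to $\pm 1$, I would use that both $\{[\alpha_i]\}_{i=1}^{g}$ and $\{[\alpha'_i]\}_{i=1}^{g}$ span Lagrangian direct summands of the symplectic lattice $H_1(\Sigma)\cong\Z^{2g}$, a standard consequence of the cut-system condition. Since $[\alpha'_i]=\pm[\alpha_i]$ for $i<g$ and $[\alpha'_g]\in L:=\langle[\alpha_1],\ldots,[\alpha_g]\rangle$ by the previous step, the Lagrangian spanned by $\{[\alpha'_i]\}$ equals $L$. The change-of-basis matrix from $\{[\alpha_i]\}$ to $\{[\alpha'_i]\}$ therefore lies in $\mathrm{GL}_g(\Z)$; it is upper triangular with diagonal $\pm 1,\ldots,\pm 1,\varepsilon_g$, so $\det=\pm\varepsilon_g\in\{\pm 1\}$ forces $\varepsilon_g\in\{-1,1\}$. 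The main obstacle is really only the first step, namely simultaneously isotoping $\alpha'_g$ off the entire system $\alpha_1\cup\cdots\cup\alpha_g$; everything afterward is a planar-surface homology computation followed by a one-line determinant check.
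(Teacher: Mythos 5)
Your argument is correct and follows essentially the same route as the paper: cut $\Sigma$ along $\alpha$ to get a planar surface, observe that $\alpha'_g$ separates it, and read off the homology relation from the boundary of one side. Your endgame differs slightly from the paper's: to rule out $\varepsilon_g=0$, the paper simply notes that $\varepsilon_g=0$ would express $[\alpha'_g]$ as a $\Z$-linear combination of $[\alpha'_1],\ldots,[\alpha'_{g-1}]$, contradicting the linear independence of the homology classes of a cut system. You instead invoke that both cut systems span Lagrangian direct summands, deduce they span the same lattice $L$, and observe that the change-of-basis matrix lies in $\mathrm{GL}_g(\Z)$ and has determinant $\pm\varepsilon_g$. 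Both are valid; the paper's is a bit leaner since it needs only linear independence, not the full Lagrangian-summand fact. One small slip: the change-of-basis matrix you describe (rows $\pm e_j$ for $j<g$, row $g$ equal to $(\varepsilon_1,\ldots,\varepsilon_g)$) is lower, not upper, triangular, but expanding along the last column still gives $\det=\pm\varepsilon_g$, so the conclusion stands. Also, you labor over isotoping $\alpha'_g$ off $\bigcup\alpha_i$ via the bigon criterion; this is unnecessary, since for a type-$0$ edge one may pick representatives with $\alpha_i=\alpha'_i$ for $i<g$ and $\alpha_g\cap\alpha'_g=\emptyset$ outright, whence $\alpha'_g\subset\Sigma\setminus\bigcup\alpha_i$ immediately (as the paper takes for granted).
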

\begin{proof}
We first recall that $\Sigma$ is oriented. 
Note that $\alpha'_g$ is a simple closed curve in $\Sigma\setminus\bigcup_{i=1}^g\alpha_i$. 
Since $\Sigma\setminus\bigcup_{i=1}^g\alpha_i$ is planar, 
the circle $\alpha'_g$ separates it into two connected components $\Sigma_-\cup\Sigma_+$. 
Orientations of $\Sigma_-$ and $\Sigma_+$ are naturally induced from $\Sigma$. 
Suppose that the orientation of $\alpha'_g$ agrees with that induced from $\Sigma_+$. 
Then the $1$-cycle $\partial[\Sigma_-]$ can be written as 
$\displaystyle \sum_{i=1}^g\varepsilon_i[\alpha_i]-[\alpha'_g]$ 
for some coefficients $\varepsilon_1,\ldots,\varepsilon_g\in\{-1,0,1\}$, 
which is $0$ in $H_1(\Sigma)$. 
If $\varepsilon_g=0$, then 
\[
0=\sum_{i=1}^g\varepsilon_i[\alpha_i]-[\alpha'_g]=
\sum_{i=1}^{g-1}\varepsilon_i[\alpha_i]-[\alpha'_g]
=\sum_{i=1}^{g-1}\varepsilon_i[\alpha'_i]-[\alpha'_g]
\]
in $H_1(\Sigma_g)$. 
It is contrary to the linear independence of $[\alpha'_1],\ldots,[\alpha'_g]$. 
Hence $\varepsilon_g=1$ or $-1$, which concludes the proof.  
\end{proof}

\section{Case $l_\alpha(\lambda)\leq1$}
\label{sec:l_alpha=0and1}
Let $X$ be a closed connected oriented $4$-manifold, 
$\T=(\Sigma;\alpha,\beta,\gamma)$ a $(g;k_1,k_2,k_3)$-trisection diagram
and $\lambda$ a realization loop of $\T$. 
In this section, we prepare some lemmas for 
the cases $l_\alpha(\lambda)=0$ and $l_\alpha(\lambda)=1$, 
which will be used in the proofs of Theorems~\ref{thm:1} and \ref{thm:2}. 

\subsection{The case $l_\alpha(\lambda)=0$.}
\label{subsec:l_alpha=0}
We first consider the case $l_\alpha(\lambda)=0$. 
Suppose that $\alpha=(\alpha_1,\ldots,\alpha_g)$ is the only vertex in $\Gamma_\alpha\cap\lambda$, 
and also suppose that both $\beta$ and $\gamma$ are good for $\alpha$. 
See Figure~\ref{fig:l_a=0} for a schematic picture. 
\begin{figure}[t]
\centering
\includegraphics[width=50mm]{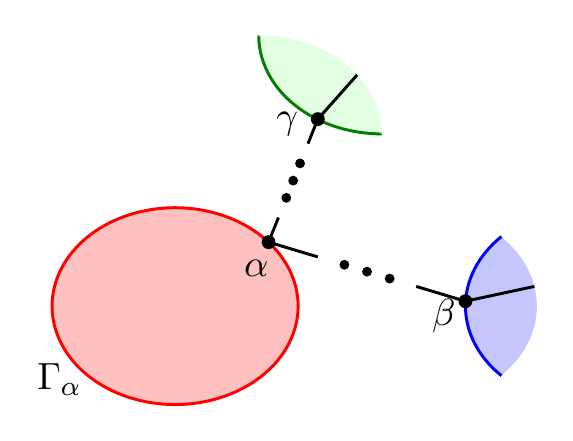}
\caption{How the cut systems are connected in $\mathcal{C}$ when $l_\alpha(\lambda)=0$.}
\label{fig:l_a=0}
\end{figure}
We suppose that the orders of $\beta=(\beta_1,\ldots,\beta_g)$ and 
$\gamma=(\gamma_1,\ldots,\gamma_g)$ satisfy 
\[
\beta_i\P\alpha_i\P\gamma_i,\quad
\beta_i\P\alpha_i\D\gamma_i,\quad
\beta_i\D\alpha_i\P\gamma_i,\quad\text{or}\quad
\beta_i\D\alpha_i\D\gamma_i
\]
for each $i\in\{1,\ldots,g\}$. 

Let $L=L_1\sqcup L_2$ be a Kirby diagram obtained from the $(g;k_1,k_2,k_3)$-trisection diagram $\T$. 
Here $L_1$ consists of $k_1$ dotted circles each $L^1_i$ of which corresponds to $\alpha_i$ such that $\beta_i\P\alpha_i$, 
and $L_2$ consists of $g$ framed knots corresponding to $\gamma_1,\ldots,\gamma_g$. 

We say that a component of a link $K$ in $S^3$ is {\it splittable} in $K$ 
if there exists a $3$-ball of which intersection with $K$ is exactly the component. 
One can easily see that if a dotted circle $L^1_i$ is splittable in $L$, 
$X$ contains $S^1\times S^3$ as a connected summand. 

\begin{proposition}
\label{prop:l_alpha=0}
If $l_\alpha(\lambda)=0$, then $X$ is geometrically simply connected or contains $S^1\times S^3$ as a connected summand. 
\end{proposition}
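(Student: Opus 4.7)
The plan is to analyze, component by component, the Kirby diagram $L=L_1\sqcup L_2$ obtained from $\T$. For each dotted circle $L^1_i$ (so $i\in\{1,\ldots,k_1\}$ and $\beta_i\P\alpha_i$), I would examine the relation between $\alpha_i$ and $\gamma_i$. Since $(\alpha,\gamma)$ is good and we have reordered $\gamma$ simultaneously with $\beta$ to match $\alpha$, either $\alpha_i\P\gamma_i$ or $\alpha_i\D\gamma_i$ holds, and moreover $\alpha_i\cap\gamma_j=\varnothing$ for $j\neq i$. In the first case I aim to show that $L^1_i$ is splittable in $L$; in the second case I aim to show that $L^1_i$ forms an independently canceling pair with $L^2_i$. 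These two cases together yield the dichotomy in the statement.

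The core of the argument is the following disk construction. Since $\alpha$ is a cut system for $H_-$, the curve $\alpha_i$ bounds a properly embedded disk $D_i\subset H_-$. I would extend $D_i$ by the thin annulus $A_i=\alpha_i\times[0,\varepsilon]\subset N(\Sigma;H_+)$ to produce an embedded disk $\tilde D_i=D_i\cup A_i$ in $S^3$ with $\partial\tilde D_i=L^1_i$. Because $D_i\subset H_-$ meets $\Sigma$ only along $\alpha_i$ and $A_i$ meets $\Sigma$ only along $\alpha_i$, the intersection $\tilde D_i\cap\gamma_j$ is transverse and in natural bijection with $\alpha_i\cap\gamma_j$ for each $j$. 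Moreover, placing the pushes of the other $\alpha_m$ ($m\neq i$) at arbitrary heights above $\Sigma$, the disjointness $\alpha_i\cap\alpha_m=\varnothing$ on $\Sigma$ forces $\tilde D_i$ to be disjoint from every other dotted circle $L^1_m$.

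In the case where some $i\in\{1,\ldots,k_1\}$ satisfies $\alpha_i\P\gamma_i$, I would first isotope $\gamma_i$ on $\Sigma$ so that $\alpha_i\cap\gamma_i=\varnothing$; combined with goodness, this gives $\alpha_i\cap\gamma_j=\varnothing$ for every $j$. Then $\tilde D_i$ is a disk in $S^3$ bounded by $L^1_i$ and disjoint from $L\setminus L^1_i$, so $L^1_i$ is a splittable unknot, and by the observation recalled just before the proposition, $X$ contains $S^1\times S^3$ as a connected summand. In the remaining case, every $i\in\{1,\ldots,k_1\}$ satisfies $\alpha_i\D\gamma_i$, so $\tilde D_i$ meets $L_2$ transversely in a single point lying on $L^2_i$. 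Thus $(L^1_i,L^2_i)$ is an independently canceling pair for each such $i$, and simultaneously cancelling all $k_1$ pairs yields a handle decomposition of $X$ without $1$-handles, so $X$ is geometrically simply connected.

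The main obstacle, while ultimately routine, is the bookkeeping required to ensure that $\tilde D_i$ and the various $L^1_m$'s are mutually disjoint simultaneously, and that $\tilde D_i$ meets $L_2$ exactly where $\alpha_i$ does. This amounts to choosing the collar heights of the various pushes of the $\alpha_m$'s into $H_+$ compatibly with the height $\varepsilon$ of the annulus $A_i$, and then invoking transversality; once that is done, the splittability claim in Case A and the cancellation claim in Case B both follow directly.
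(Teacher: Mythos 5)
Your proposal is correct and follows essentially the same argument as the paper: it dichotomizes over whether some dotted-circle index $i$ satisfies $\alpha_i\P\gamma_i$ (giving a splittable $L^1_i$ and hence an $S^1\times S^3$ summand) or whether every such $i$ satisfies $\alpha_i\D\gamma_i$ (so all dotted circles independently cancel, leaving a $1$-handle-free diagram). Your explicit construction of the disks $\tilde D_i=D_i\cup A_i$ is simply an unpacking of the facts already set up in Subsection~\ref{subsec:Kirby_diag} about splittable dotted circles and independently canceling pairs, which the paper's proof invokes directly.
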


\begin{proof}
If $\beta_i\P\alpha_i\P\gamma_i$ for some $i\in\{1,\ldots,g\}$, 
$X$ contains $S^1\times S^3$ as a connected summand since $L^1_{i}$ is splittable in $L$. 
Then we suppose that $\beta_i\P\alpha_i\P\gamma_i$ does not hold for any $i\in\{1,\ldots,g\}$ 
in the rest of the proof. 

Suppose $\beta_i\P\alpha_i\D\gamma_i$ for some of $i\in\{1,\ldots,g\}$. 
Then $L^1_i$ and $L^2_i$ form an independently canceling pair since $\alpha$ and $\gamma$ are good. 
Hence we can remove these components $L^1_i$ and $L^2_i$ from $L$ without changing the other components. 
This implies that 
$X$ is described by a Kirby diagram without dotted circles, 
and thus $X$ is geometrically simply connected.

If $\beta_i\D\alpha_i\P\gamma_i$ or $\beta_i\D\alpha_i\D\gamma_i$ for each $i$, 
then it also follows that $X$ is geometrically simply connected. 
\end{proof}
This proposition implies that if $X$ is not geometrically simply connected and and does not contain $S^1\times S^3$ as a connected summand, then $L_X\geq 3$.  
We will strengthen this inequality in Theorem~\ref{thm:1} by investigating the case $l_\alpha=1$. 

\subsection{The case $l_\alpha(\lambda)=1$}
\label{subsec:l_alpha=1}
We next consider the case $l_\alpha(\lambda)=1$. 
The intersection $\Gamma_\alpha\cap\lambda$ is exactly one type-0 edge, 
of which endpoints will be denoted by $\alpha$ and $\alpha'$. 
Suppose that $\alpha$ is good for a vertex $\beta\in\Gamma_\beta$ and 
that $\alpha'$ is good for a vertex $\gamma\in\Gamma_\gamma$. 
See Figure~\ref{fig:l_a=1}.
\begin{figure}[t]
\centering
\includegraphics[width=50mm]{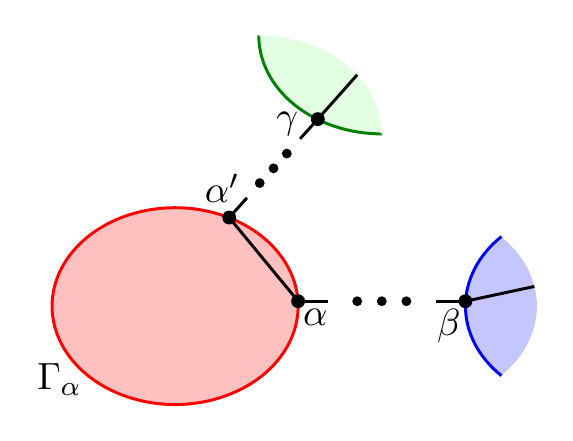}
\caption{How the cut systems are connected in $\mathcal{C}$ when $l_\alpha(\lambda)=1$.}
\label{fig:l_a=1}
\end{figure}
We reorder $\alpha=(\alpha_1,\ldots,\alpha_g)$ and $\alpha'=(\alpha'_1,\ldots,\alpha'_g)$ 
so that $\alpha_i\P\alpha'_i$ for $i\in\{1,\ldots,g-1\}$ and $\alpha_g\disjoint\alpha'_g$. 
We also reorder 
$\beta=(\beta_1,\ldots,\beta_g)$ and $\gamma=(\gamma_1,\ldots,\gamma_g)$ 
so that 
\[
\beta_i\P\alpha_i~\text{ or }~
\beta_i\D\alpha_i,\qquad\text{and}\qquad
\alpha'_i\P\gamma_i~\text{ or }~
\alpha'_i\D\gamma_i
\]
for each $i\in\{1,\ldots,g\}$. 

Let $L=L_1\sqcup L_2$ be a Kirby diagram obtained from the trisection diagram $(\Sigma; \alpha, \beta, \gamma)$. 
Here $L_1$ consists of $k_1$ dotted circles each $L^1_i$ of which corresponds to $\alpha_i$ such that $\beta_i\P\alpha_i$, 
and $L_2$ consists of $g$ framed knots corresponding to $\gamma_1,\ldots,\gamma_g$. 

\begin{lemma}
\label{lem:1}
Suppose that $\beta_i\P \alpha_i \P\alpha_i' \P \gamma_i$ for some $i\in\{1,\ldots,g-1\}$. 
Then $X$ contains $S^1\times S^3$ as a connected summand.
\end{lemma}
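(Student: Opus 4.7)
The plan is to show that the dotted circle $L^1_i$, which is present among the components of $L_1$ because $\beta_i\P\alpha_i$, is splittable in the link $L$; by the observation recorded just before Proposition~\ref{prop:l_alpha=0}, this implies that $X$ contains $S^1\times S^3$ as a connected summand.

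To prepare, since $i\le g-1$ and $\alpha_i\P\alpha'_i$, I choose representatives on $\Sigma$ so that $\alpha_i$ and $\alpha'_i$ coincide as curves; denote this common curve by $c$. Using the goodness of the pair $(\alpha',\gamma)$, I further arrange a representative of $\gamma$ so that $\gamma_k\cap c=\emptyset$ for every $k\ne i$, and then, using $\gamma_i\P\alpha'_i=c$, I take $\gamma_i$ to be a parallel pushoff of $c$ on $\Sigma$, disjoint from $c$. The remaining curves $\alpha_j$ ($j\ne i$) that produce the other dotted circles in $L_1$ are automatically disjoint from $c$ since $\alpha$ is a cut system.

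Next I construct a disk $\tilde D_i$ bounded by $L^1_i$ and disjoint from $L\setminus L^1_i$. Let $D_i\subset H_-$ be a properly embedded disk with $\partial D_i=c$, which exists because $\alpha$ is a cut system for $H_-$, and let $A\subset\Sigma\times[0,\epsilon]\subset H_+$ be the thin annulus cobounding $c$ and $L^1_i$. Set $\tilde D_i=D_i\cup A$, an embedded disk in $S^3$ with $\partial\tilde D_i=L^1_i$, meeting $\Sigma$ exactly along $c$. The part $D_i$ lies in $\Int H_-$ and so avoids every dotted circle (which sits in the $H_+$-collar) and every framed knot (which sits on $\Sigma$); the annular part $A$ can intersect the remaining components of $L$ only at intersections on $\Sigma$ of $c$ with some $\alpha_j$ ($j\ne i$) or some $\gamma_k$ ($1\le k\le g$), all of which vanish by the previous paragraph. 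Hence a small regular neighborhood of $\tilde D_i$ in $S^3$ is a $3$-ball containing $L^1_i$ and disjoint from $L\setminus L^1_i$, exhibiting $L^1_i$ as splittable.

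The main delicate point I anticipate is ensuring that the coordinated choice of representatives in the second paragraph is compatible with the Kirby diagram construction of Subsection~\ref{subsec:Kirby_diag}. This should cause no real difficulty: the goodness condition for $(\alpha,\beta)$ constrains only the relative position of $\alpha$ and $\beta$, so once the isotopies above have fixed representatives of $\alpha$, $\alpha'$, and $\gamma$, one can realize it by an isotopy of $\beta$ alone, which affects none of the disjointness conditions already arranged.
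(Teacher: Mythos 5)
Your proposal is correct and takes essentially the same approach as the paper: the paper's proof of this lemma is the one-line assertion that $L^1_i$ is splittable in $L$, invoking the observation stated just before Proposition~\ref{prop:l_alpha=0}. Your write-up simply supplies the geometric details (choosing representatives with $\alpha_i=\alpha'_i$, using the goodness of $(\alpha',\gamma)$ to clear $\gamma$ away from that curve, and exhibiting the disk $D_i\cup A$ that witnesses splittability), which is exactly what the authors leave implicit.
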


\begin{proof}
The dotted circle $L^1_i$ is splittable in $L$, and the lemma follows. 
\end{proof}

By the assumptions on the orders of $\alpha,\alpha',\beta$ and $\gamma$, 
the $g$-th curves of them satisfy one of the following:
\[
\beta_g\P\alpha_g\disjoint\alpha'_g\D\gamma_g,\quad
\beta_g\D\alpha_g\disjoint\alpha'_g\P\gamma_g,\quad
\beta_g\D\alpha_g\disjoint\alpha'_g\D\gamma_g,\quad\text{or}\quad
\beta_g\P\alpha_g\disjoint\alpha'_g\P\gamma_g. 
\]
We discuss these cases in the following lemmas. 

\begin{lemma}
\label{lem:2}
Suppose that $\beta_i\P \alpha_i \P\alpha_i' \P \gamma_i$ does not hold for any $i\in\{1,\ldots,g-1\}$ 
and that $\beta_g\D \alpha_g\disjoint \alpha_g'\P \gamma_g$. 
Then $X$ is geometrically simply connected. 
\end{lemma}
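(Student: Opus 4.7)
The plan is to show that the Kirby diagram $L = L_1 \sqcup L_2$ associated with $(\Sigma; \alpha, \beta, \gamma)$ admits $k_1$ independent cancellations that eliminate every dotted circle, thereby exhibiting $X$ as geometrically simply connected.

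The preparatory observation is that $\beta_g \D \alpha_g$ forces $k_1 \leq g - 1$, so the index $i$ of every dotted circle $L^1_i$ satisfies $i \leq g - 1$, and in particular $\alpha_i \P \alpha'_i$. This allows me to represent the cut system $\alpha$ by the mutually disjoint curves $\alpha'_1, \ldots, \alpha'_{g-1}$ together with a suitable disjoint representative $\alpha''_g$ of the isotopy class of $\alpha_g$; with respect to this representative, each dotted circle $L^1_i$ with $i \leq k_1$ is a pushed-in copy of $\alpha'_i$.

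For each such $i$, the ordering convention gives $\beta_i \P \alpha_i$, and $i \leq g - 1$ gives $\alpha_i \P \alpha'_i$. Since by hypothesis $\beta_i \P \alpha_i \P \alpha'_i \P \gamma_i$ fails while $(\alpha', \gamma)$ is good, the only remaining possibility is $\alpha'_i \D \gamma_i$; the goodness of $(\alpha', \gamma)$ further ensures that $\alpha'_i$ is disjoint from $\gamma_j$ for every $j \neq i$. By the criterion at the end of Subsection~\ref{subsec:Kirby_diag}, $(L^1_i, L^2_i)$ is then an independently canceling pair. Performing these $k_1$ independent cancellations simultaneously produces a Kirby diagram for $X$ with no dotted circles, so $X$ admits a handle decomposition without $1$-handles.

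The step I would check most carefully is the representative change: one needs to verify that $(\alpha'_1, \ldots, \alpha'_{g-1}, \alpha''_g)$ can indeed be chosen as a cut system representing $\alpha$ in a way that is compatible with the standard position of Figure~\ref{fig:std_pos} used in Subsection~\ref{subsec:Kirby_diag}. This follows because all the parallelism and transverse-intersection relations among $\alpha$, $\beta$ and $\gamma$ are invariant under isotopy within each isotopy class, and mutually disjoint representatives within the isotopy classes of any cut system can always be arranged in generic position.
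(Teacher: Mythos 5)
Your proof is correct and follows essentially the same approach as the paper: you identify that all dotted circles $L^1_i$ have index $i \leq g-1$, deduce $\alpha'_i \D \gamma_i$ from the hypotheses and the goodness of $(\alpha',\gamma)$, invoke the independently-canceling-pair criterion, and cancel all such pairs. The paper's proof glosses over the representative-change point you flag, simply asserting that $\gamma_j$ does not intersect $\alpha_i$ ``since $\alpha_i \P \alpha'_i$''; your extra care is reasonable, and the resolution is as you indicate---the Kirby diagram is built from chosen representatives, and one may take $(\alpha'_1,\ldots,\alpha'_{g-1},\alpha_g)$ as the representative cut system for the vertex $\alpha$.
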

\begin{proof}
For $i\in\{1,\ldots,g-1\}$ with $\beta_i\P\alpha_i$, 
we have $\beta_i\P\alpha_i\P\alpha'_i\D\gamma_i$ by the assumptions. 
For $j\in\{1,\ldots,g\}\setminus\{i\}$, 
$\gamma_j$ does not intersect $\alpha_i$ since $\alpha_i\P\alpha'_i$, 
and hence $L^1_i$ and $L^2_i$ form an independently canceling pair. 
We can cancel all such pairs simultaneously, and it implies that 
$X$ admits a Kirby diagram without dotted circles. 
Thus, $X$ is geometrically simply connected. 
\end{proof}

\begin{lemma}
\label{lem:3}
Suppose that $\beta_i\P \alpha_i \P\alpha_i' \P \gamma_i$ does not hold for any $i\in\{1,\ldots,g-1\}$ 
and that $\beta_g\P \alpha_g\disjoint \alpha_g'\D \gamma_g$.
Then $X$ is geometrically simply connected. 
\end{lemma}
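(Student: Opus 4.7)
The plan is to switch to an alternative Kirby diagram of $X$ in which every dotted circle admits an independently canceling $2$-handle. In the Kirby diagram directly obtained from $(\Sigma;\alpha,\beta,\gamma)$, the dotted circle $L^1_g$ for $\alpha_g$ (present because $\beta_g\P\alpha_g$) is difficult to cancel: $\gamma_g$ meets $\alpha'_g$ transversely in one point but a priori intersects $\alpha_g$ in several points. The idea is to rearrange the trisection diagram so that the $g$-th index no longer contributes any dotted circle.

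I would first apply Remark~\ref{rmk:permutate_CS} to replace $\alpha$ by $\alpha'$, so that $(\Sigma;\alpha',\beta,\gamma)$ still represents $X$, and then apply Remark~\ref{rmk:permutate_KD} with the transposition exchanging the second and third coordinates to obtain the trisection diagram $(-\Sigma;\alpha',\gamma,\beta)$ of $X$. Since $(\alpha',\gamma)$ is good by assumption, this trisection diagram yields a Kirby diagram $L=L_1\sqcup L_2$ whose dotted circles $L^1_i$ come from the $\alpha'_i$ with $\alpha'_i\P\gamma_i$, and whose framed knots $L^2_j$ come from the $\beta_j$ equipped with the surface framing.

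Next, I would identify exactly which indices contribute dotted circles. For $i=g$, the hypothesis $\alpha'_g\D\gamma_g$ excludes $\alpha'_g\P\gamma_g$, so $i=g$ is not dotted. For $i<g$ with $\beta_i\P\alpha_i$, the forced chain $\beta_i\P\alpha_i\P\alpha'_i$ combined with $\alpha'_i\P\gamma_i$ would produce the forbidden chain of Lemma~\ref{lem:1}, so $\alpha'_i\D\gamma_i$ here and no dotted circle arises. Hence the dotted indices are exactly those $i<g$ with $\beta_i\D\alpha_i$ and $\alpha'_i\P\gamma_i$.

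For each such $i$, I would verify that $(L^1_i,L^2_i)$ is an independently canceling pair. From $\alpha'_i\P\alpha_i$ together with the goodness of $(\alpha,\beta)$, the curve $\alpha'_i$ is disjoint from $\beta_j$ for every $j\ne i$ up to isotopy, and it meets $\beta_i$ transversely once since $\alpha_i\D\beta_i$. Canceling all such pairs simultaneously yields a Kirby diagram of $X$ without any dotted circle, so $X$ admits a handle decomposition without $1$-handles and is geometrically simply connected. The main piece of bookkeeping that could cause concern is the orientation reversal of $\Sigma$ introduced by the transposition of $\beta$ and $\gamma$, but this is already absorbed into Remark~\ref{rmk:permutate_KD}.
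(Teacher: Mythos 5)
Your proposal is correct and follows exactly the route the paper takes: the paper's own proof of this lemma is simply ``consider the Kirby diagram obtained from $(-\Sigma;\alpha',\gamma,\beta)$ and argue as in Lemma~\ref{lem:2},'' and you have carried out precisely that reduction, spelling out which indices become dotted and why each such dotted circle admits an independently canceling $2$-handle coming from $\beta_i$.
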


\begin{proof}
Considering a Kirby diagram obtained from the trisection diagram $(-\Sigma;\alpha',\gamma,\beta)$ 
(cf. Remarks~\ref{rmk:permutate_KD} and \ref{rmk:permutate_CS}), 
we can prove this lemma in the same way as Lemma~\ref{lem:2}. 
\end{proof}

\begin{lemma}
\label{lem:4}
Suppose that $\beta_i\P \alpha_i \P\alpha_i' \P \gamma_i$ does not hold for any $i\in\{1,\ldots,g-1\}$ 
and that $\beta_g\D \alpha_g\disjoint \alpha_g'\D \gamma_g$. 
Then $X$ is geometrically simply connected. 
\end{lemma}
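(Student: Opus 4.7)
The plan is to apply the same independent-cancellation argument that was used for Lemma~\ref{lem:2}, since the hypothesis on the $g$-th curves, although differing from that of Lemma~\ref{lem:2}, still ensures that $\alpha_g$ contributes no dotted circle to the associated Kirby diagram.

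Concretely, I would take a Kirby diagram $L=L_1\sqcup L_2$ obtained from the trisection diagram $(\Sigma;\alpha,\beta,\gamma)$, where $L_1$ is the collection of dotted circles $L^1_i$ coming from those indices $i$ with $\beta_i\P\alpha_i$ and $L_2$ is the collection of framed knots $L^2_j$ coming from $\gamma_j$. Because $\beta_g\D\alpha_g$, the index $i=g$ contributes no dotted circle, so every dotted circle carries an index $i\in\{1,\ldots,g-1\}$.

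For such an index $i$ with $\beta_i\P\alpha_i$, the assumption that $\beta_i\P\alpha_i\P\alpha'_i\P\gamma_i$ fails, combined with $\alpha_i\P\alpha'_i$, forces $\alpha'_i\D\gamma_i$, and hence $\alpha_i\D\gamma_i$. The goodness of $(\alpha',\gamma)$ means that $\gamma_j$ is disjoint from $\alpha'_i$ for every $j\ne i$, and by $\alpha_i\P\alpha'_i$ this is also true with $\alpha'_i$ replaced by $\alpha_i$. Thus $L^1_i$ bounds a disk that meets $L_2$ only at a single point of $L^2_i$, so the pair $(L^1_i,L^2_i)$ is an independently canceling pair.

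Performing all these cancellations simultaneously yields a Kirby diagram for $X$ with no dotted circles, and therefore $X$ admits a handle decomposition without $1$-handles, i.e., $X$ is geometrically simply connected. I expect no real obstacle here, since the argument is essentially a verbatim copy of the proof of Lemma~\ref{lem:2}: swapping $\alpha'_g\P\gamma_g$ for $\alpha'_g\D\gamma_g$ affects only the local behavior of the $g$-th framed knot $L^2_g$, which plays no role in the cancellation of pairs with $i<g$.
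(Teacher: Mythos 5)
Your argument is essentially the same as the paper's: identify the dotted circles (which all carry indices $i<g$ because $\beta_g\D\alpha_g$), use the forced pattern $\beta_i\P\alpha_i\P\alpha'_i\D\gamma_i$ for each such index, and cancel all dotted circles against the corresponding $L^2_i$, leaving a Kirby diagram with no $1$-handles. So the approach and conclusion match.

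There is one small but real discrepancy in the cancellation step. You assert that each pair $(L^1_i,L^2_i)$ is an \emph{independently} canceling pair, i.e.\ that the cancellation disk bounded by $L^1_i$ avoids every $L^2_j$ with $j\ne i$, including $L^2_g$. The paper deliberately stops short of this: its proof of Lemma~\ref{lem:4} says only that the disk $D_i$ avoids $L^2_j$ for $j\in\{1,\ldots,g-1\}\setminus\{i\}$ ``but it possibly intersects $L^2_g$,'' and then observes that this is harmless because the cancellations can still be carried out one after another with only $L^2_g$ changing. Your stronger claim rests on upgrading ``$\alpha'_i$ is disjoint from $\gamma_g$ (by goodness of $(\alpha',\gamma)$)'' to ``$\alpha_i$ is disjoint from $\gamma_g$'' via $\alpha_i\P\alpha'_i$. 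That is exactly the reasoning the paper itself uses in the proof of Lemma~\ref{lem:2}, so it is plausibly fine here too, but since the paper explicitly hedges in Lemma~\ref{lem:4} you should either justify carefully why the hedge is unnecessary (e.g.\ by fixing representatives so that $\alpha_i=\alpha'_i$ for $i<g$ before constructing the Kirby diagram) or, more safely, adopt the paper's weaker formulation: allow $D_i$ to meet $L^2_g$, note that each cancellation then only alters $L^2_g$ and leaves the remaining dotted circles and their partners untouched, and conclude that all dotted circles can still be removed. Either way the conclusion that $X$ is geometrically simply connected goes through.
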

\begin{proof}
For each $i\in\{1,\ldots,g-1\}$ such that $\beta_i\P\alpha_i$, 
we have $\beta_i\P\alpha_i\P\alpha'_i\D\gamma_i$ by the assumptions, and 
there exists a $2$-disk $D_i$ bounded by $L^1_i$ such that it intersects $L^2_i$ exactly once. 
Moreover, this disk $D_i$ can be chosen so that it does not intersect $L^2_j$ for $j\in\{1,\ldots,g-1\}\setminus\{i\}$, 
but it possibly intersects $L^2_g$. 
Although the cancelation of $L^1_i\sqcup L^2_i$ might change $L^2_g$ in the Kirby diagram, 
the other components of the diagram do not change by the cancelation. 
Hence all the dotted circles can be removed from the diagram, 
which implies that $X$ is geometrically simply connected. 
\end{proof}

\begin{lemma}
\label{lem:5}
Suppose that $\beta_i\P\alpha_i\P\alpha_i'\D\gamma_i$ or 
$\beta_i\D\alpha_i\P\alpha_i'\P\gamma_i$ for each $i\in\{1,\ldots,g-1\}$
and that $\beta_g\P\alpha_g\disjoint \alpha_g'\P\gamma_g$. 
Then $X$ contains $S^1\times S^3$ as a connected summand. 
\end{lemma}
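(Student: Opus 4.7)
The plan is to show that the dotted circle $L^1_g$ becomes splittable in the Kirby diagram $L=L_1\sqcup L_2$ obtained from $(\Sigma;\alpha,\beta,\gamma)$ after cancelling several independently canceling pairs; by the discussion at the end of Subsection~\ref{subsec:Kirby_diag}, splittability of $L^1_g$ implies $X$ contains $S^1\times S^3$ as a connected summand.

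First, for each $j<g$ with $\beta_j\P\alpha_j$---equivalently (under our hypothesis), with $\alpha_j\D\gamma_j$---I would verify that $(L^1_j,L^2_j)$ is an independently canceling pair. The disk $D_j$ bounded by $\alpha_j$ in $H_-$ meets $L^2_j$ in the single point $\alpha_j\cap\gamma_j$, and for $k\neq j$ one has $D_j\cap L^2_k=\alpha_j\cap\gamma_k=\emptyset$: when $k<g$ satisfies $\beta_k\D\alpha_k$, or when $k=g$, the curve $\gamma_k$ is isotopic to $\alpha_k$ or to $\alpha'_g$ respectively, and these are disjoint from $\alpha_j=\alpha'_j$ because they all belong to the cut system $\alpha'$; when $k<g$ satisfies $\beta_k\P\alpha_k$ and $k\neq j$, the goodness of $(\alpha',\gamma)$ gives $\alpha'_j\cap\gamma_k=\emptyset$.

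Next I would cancel all these pairs $(L^1_j,L^2_j)$ simultaneously; by the definition of independent cancellation the remaining components are unchanged, leaving only the dotted circle $L^1_g$ together with the framed knots $L^2_g$ and $\{L^2_j:j<g,\ \beta_j\D\alpha_j\}$.

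Finally, I would show that $L^1_g$ is splittable in this reduced diagram. After a small isotopy of $\gamma_g$ on $\Sigma$ making it disjoint from $\alpha_g$---which is possible because $\gamma_g\P\alpha'_g$ and $\alpha_g\cap\alpha'_g=\emptyset$, with the isotopy supported in a neighborhood of an annular region cobounded by $\gamma_g$ and $\alpha'_g$---the disk $D_g$ in $H_-$ bounded by $\alpha_g$ is disjoint from each remaining $L^2_k$: for $k<g$ with $\beta_k\D\alpha_k$, $\gamma_k\P\alpha_k$ and $\alpha_g\cap\alpha_k=\emptyset$ since $\alpha$ is a cut system; for $k=g$ this follows from the isotopy. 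Hence $D_g$ is a splitting disk for $L^1_g$. The main subtlety I expect is arranging the isotopy of $\gamma_g$ so as not to introduce essential intersections with $\alpha_g$, which is handled by supporting the isotopy in the annular region above.
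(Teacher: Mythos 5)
Your proposal is correct and follows the same strategy as the paper: cancel the independently canceling pairs $(L^1_i,L^2_i)$ for $i$ with $\beta_i\P\alpha_i\P\alpha'_i\D\gamma_i$, then observe that the remaining dotted circle $L^1_g$ is geometrically unlinked from the surviving framed knots. The paper phrases the last step by noting that each remaining $L^2_i$ is splittable and $L^2_g$, $L^1_g$ are unlinked, whereas you exhibit the splitting disk for $L^1_g$ directly, but the content is the same.
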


\begin{proof}
For $i\in\{1,\ldots,g-1\}$ with $\beta_i\P\alpha_i\P\alpha_i'\D\gamma_i$, 
the pair of $L^1_i$ and $L^2_i$ 
is a independently canceling pair, and hence we can remove these components from $L$ without changing the other part. 
We then write $L'$ for the diagram obtained by canceling all such pairs. 
This diagram $L'$ is regarded as a sublink of $L$, and it has a single dotted circle $L^1_g$. 
For $i\in\{1,\ldots,g-1\}$ with $\beta_i\D\alpha_i\P\alpha_i'\P\gamma_i$, 
the component $L^2_i$ is splittable in $L'$. 
The components $L^2_g$ and $L^1_g$ are unlinked since $\alpha_g\disjoint \alpha_g'\P\gamma_g$. 
Therefore, $L^1_g$ is splittable in $L'$, which implies that $X$ contains $S^1\times S^3$ as a connected summand. 
\end{proof}

\begin{remark}
\label{rmk:lemma5}
The assumption in Lemma~\ref{lem:5} actually leads to $X\cong S^1\times S^3$. 
\end{remark}

By Lemmas~\ref{lem:1}, \ref{lem:2}, \ref{lem:3}, \ref{lem:4} and \ref{lem:5}, 
we obtain the following proposition. 

\begin{proposition}
\label{prop:6}
If $X$ is not geometrically simply connected and does not contain $S^1\times S^3$ as a connected summand, 
then the following hold:
\begin{itemize}
\item
$\beta_i\P\alpha_i\P\alpha_i'\P\gamma_i$ does not hold for any $i\in\{1,\ldots,g-1\}$, 
\item
$\beta_i\D\alpha_i\P\alpha_i'\D\gamma_i$ for at least one of $i\in\{1,\ldots,g-1\}$, and 
\item
$\beta_g\P\alpha_g\disjoint \alpha_g'\P\gamma_g$. 
\end{itemize}
\end{proposition}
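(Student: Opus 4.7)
The plan is to prove Proposition~\ref{prop:6} by contraposition, using Lemmas~\ref{lem:1}--\ref{lem:5} as a case-analysis toolkit. Throughout, I assume $X$ is neither geometrically simply connected nor contains $S^1\times S^3$ as a connected summand; each of (1), (2), (3) is then derived by ruling out the opposing configurations via the appropriate lemma.

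First I would deduce (1) directly from Lemma~\ref{lem:1}: the existence of some $i\in\{1,\ldots,g-1\}$ with $\beta_i\P\alpha_i\P\alpha_i'\P\gamma_i$ would immediately force an $S^1\times S^3$ summand, contradicting our assumption. For (3), I would enumerate the four possible configurations of the $g$-th curves listed just before Lemma~\ref{lem:2}. The configurations $\beta_g\D\alpha_g\disjoint\alpha_g'\P\gamma_g$, $\beta_g\P\alpha_g\disjoint\alpha_g'\D\gamma_g$, and $\beta_g\D\alpha_g\disjoint\alpha_g'\D\gamma_g$ are ruled out by Lemmas~\ref{lem:2}, \ref{lem:3}, and \ref{lem:4} respectively, each of which would force $X$ to be geometrically simply connected. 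Hence only $\beta_g\P\alpha_g\disjoint\alpha_g'\P\gamma_g$ survives, which is (3).

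Finally I would establish (2) by contradiction. Suppose $\beta_i\D\alpha_i\P\alpha_i'\D\gamma_i$ fails for every $i\in\{1,\ldots,g-1\}$. By (1), the only remaining possibilities for each such $i$ are $\beta_i\P\alpha_i\P\alpha_i'\D\gamma_i$ and $\beta_i\D\alpha_i\P\alpha_i'\P\gamma_i$. Together with the $g$-th configuration established in (3), this places the diagram precisely in the hypothesis of Lemma~\ref{lem:5}, whose conclusion (an $S^1\times S^3$ summand) contradicts our standing assumption.

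The proof is thus a direct case analysis, and the main obstacle is not analytic but organizational: matching every possible tuple of configurations of the data $(\beta_i,\alpha_i\P\alpha_i',\gamma_i)$ for $i=1,\ldots,g$ to the correct lemma. The nontrivial content all sits inside Lemmas~\ref{lem:1}--\ref{lem:5}, which have already been proved by Kirby-calculus arguments (independently canceling pairs and splittability of dotted circles); the proposition itself just assembles them into a complete enumeration.
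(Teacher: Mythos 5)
Your proof is correct and matches the paper's intended argument; the paper simply states that Proposition~\ref{prop:6} follows from Lemmas~\ref{lem:1}--\ref{lem:5} without spelling out the case analysis, and your organization of the three conclusions (deducing (1) first so that Lemmas~\ref{lem:2}--\ref{lem:4} apply to give (3), then invoking Lemma~\ref{lem:5} for (2)) is exactly the implicit logic.
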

\section{Proof of Theorem~1}
\label{sec:Thm1}
In this section, we will prove Theorem~\ref{thm:1}. 
Let $X$ be a closed connected oriented $4$-manifold, 
$\T=(\Sigma;\alpha,\beta,\gamma)$ a $(g;k_1,k_2,k_3)$-trisection diagram of $X$
and $\lambda$ a realization loop of $\T$. 
The proof is by contradiction,
so we assume the following: 
 \begin{itemize}
\item[(A1)] $L_X\leq3$. 
\item[(A2)] $X$ is not geometrically simply connected.
\item[(A3)] $X$ does not contain $S^1\times S^3$ as a connected summand. 
 \end{itemize}

If one of $l_\alpha(\lambda)$, $l_\beta(\lambda)$ and $l_\gamma(\lambda)$ is 0, 
it contradicts Proposition~\ref{prop:l_alpha=0}. 
Thus, instead of the condition (A1), we may assume the following:
 \begin{itemize}
\item[(A1)'] $l_\alpha(\lambda)=l_\beta(\lambda)=l_\gamma(\lambda)=1$.
 \end{itemize}

\subsection{Sequences of curves of $\alpha,\beta$ and $\gamma$}
Each of $\Gamma_\alpha\cap\lambda$, $\Gamma_\beta\cap\lambda$ and $\Gamma_\gamma\cap\lambda$ 
contains exactly two vertices, 
which will be denoted by $\alpha,\,\alpha'\in\Gamma_\alpha,\ \beta,\,\beta'\in\Gamma_\beta$ and 
$\gamma,\,\gamma'\in\Gamma_\gamma$. 
Suppose that $(\alpha,\beta)$, $(\alpha',\gamma)$ and $(\beta',\gamma')$ are good. 
Moreover, we can assume that the orders of the curves satisfy the following:
 \begin{itemize}
\item[(A4)] $\alpha_i\P\alpha_i'$ for $i\in\{1, \ldots , g-1\}$, and $\alpha_g\disjoint\alpha_g'$.
\item[(A5)] $\beta_i\D\alpha_i$ for $i\in\{1, \ldots , g-k_1\}$, and otherwise $\beta_i\P\alpha_i$.
\item[(A6)] $\alpha_i'\D\gamma_i$ or $\alpha_i'\P\gamma_i$ for $i\in\{1, \ldots , g\}$.
\item[(A7)] $\beta_i'\P\beta_i$ or $\beta_i'\disjoint \beta_i$ for $i\in\{1, \ldots , g\}$.
\item[(A8)] $\gamma_i'\P\gamma_i$ or $\gamma_i'\disjoint \gamma_i$ for $i\in\{1, \ldots , g\}$.
\item[(A9)] $\gamma_i'\P\beta_{\sigma(i)}'$ or $\gamma_i'\D \beta_{\sigma(i)}'$ for $i\in \{1, \ldots, g\}$ and a fixed permutation $\sigma$ of $\{1, \ldots, g\}$. 
 \end{itemize}
\begin{lemma}
\label{lem:around_disjoint}
If $\gamma_{i}\disjoint\gamma'_{i}$ for some $i\in\{1,\ldots,g\}$, then $\alpha_i\P\gamma_{i}\disjoint\gamma'_{i}\P\beta'_i$. 
\end{lemma}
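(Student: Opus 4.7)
The plan is to prove the two isotopy assertions $\alpha_i \P \gamma_i$ and $\gamma'_i \P \beta'_i$ separately, each by contradiction, by reducing a suitably chosen Kirby diagram for $X$ (built from a permuted trisection diagram allowed by Remark \ref{rmk:permutate_CS}) along the lines of Section \ref{sec:l_alpha=0and1}, so as to violate one of the standing assumptions (A2) or (A3).

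First I would rule out $i = g$. Proposition \ref{prop:6} yields $\beta_g \P \alpha_g \disjoint \alpha'_g \P \gamma_g$, so $\alpha_g$ is disjoint from but not isotopic to $\gamma_g$, already incompatible with the target $\alpha_g \P \gamma_g$. To upgrade this observation into a genuine contradiction with the hypothesis $\gamma_g \disjoint \gamma'_g$, I would consider the trisection diagram $(\Sigma; \alpha, \beta', \gamma')$: its Kirby diagram has dotted circles coming from the $(\alpha, \beta)$-data but $2$-handles given by the $\gamma'_j$'s, and the $g$-th-column configuration $\alpha_g \disjoint \alpha'_g$, $\alpha'_g \P \gamma_g$, $\gamma_g \disjoint \gamma'_g$ allows exactly the kind of independent handle cancellation exploited in Lemmas \ref{lem:2}--\ref{lem:4}, forcing $X$ to be geometrically simply connected and contradicting (A2).

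For $i < g$, the relation $\alpha_i \P \alpha'_i$ from (A4) reduces the claim $\alpha_i \P \gamma_i$ to excluding $\alpha'_i \D \gamma_i$ from (A6). Supposing $\alpha'_i \D \gamma_i$, I would work with the Kirby diagram from $(\Sigma; \alpha, \beta, \gamma')$ and argue that $\alpha_i$ meets the framed components coming from the $\gamma'_j$'s only at index $i$: for $j \neq i$ this uses $\gamma_j \P \gamma'_j$ combined with the good pair $(\alpha', \gamma)$, and the residual intersections of $\alpha_i$ with $\gamma'_i$ are controlled via Lemma \ref{lem:0edge_change}. This would produce an independently canceling pair for each dotted circle, and the resulting all-$2$-handle diagram again contradicts (A2).

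The assertion $\gamma'_i \P \beta'_i$ follows by a structurally parallel argument: by (A9), the good pair $(\beta', \gamma')$ matches $\gamma'_i$ with $\beta'_{\sigma(i)}$ via $\P$ or $\D$; the unique index $j_0$ where $\beta \to \beta'$ changes (from $l_\beta(\lambda) = 1$ and (A7)) must match $i$, forcing $\sigma(i) = i$, and the $\D$-case is ruled out by a similar Kirby diagram reduction on $(\Sigma; \alpha, \beta', \gamma)$. The main obstacle will be the exhaustive bookkeeping needed to check that in every undesired sub-case the Kirby diagram reduces either to one without dotted circles (contradicting (A2)) or to one with a splittable dotted circle (contradicting (A3)); Lemma \ref{lem:0edge_change} should be the key organizational tool for the homological side of this case analysis, since the permuted diagrams one is forced to consider are no longer guaranteed to have good pairings.
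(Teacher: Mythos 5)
The paper's own proof of this lemma is a one-line symmetry argument: since $l_\gamma(\lambda)=1$, the entire analysis of Subsection~\ref{subsec:l_alpha=1} (Lemmas~\ref{lem:1}--\ref{lem:5}) can be re-run with the cyclically permuted triple in which $\Gamma_\gamma$ plays the role of $\Gamma_\alpha$ (justified by Remarks~\ref{rmk:permutate_KD} and \ref{rmk:permutate_CS}). The good pairs around the unique type-$0$ edge $\gamma\disjoint\gamma'$ are $(\alpha',\gamma)$ and $(\gamma',\beta')$, so the $\gamma$-version of Proposition~\ref{prop:6} immediately yields the stated parallelism at the unique index $i$ with $\gamma_i\disjoint\gamma'_i$.

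Your proposal misses this clean reduction and instead tries to re-derive the conclusion from scratch, and this creates real problems. First, the hybrid triples you invoke — $(\Sigma;\alpha,\beta,\gamma')$, $(\Sigma;\alpha,\beta',\gamma')$, $(\Sigma;\alpha,\beta',\gamma)$ — do not arise as any of the six cyclic rotations/reflections of the realization loop, and they mix a vertex of $\Gamma_\alpha$ adjacent to $\Gamma_\beta$ with a vertex of $\Gamma_\gamma$ adjacent to $\Gamma_\beta$ (not to $\Gamma_\alpha$). The interface conditions controlling the Kirby diagram reductions in Lemmas~\ref{lem:2}--\ref{lem:4} (which rely on a specific pattern $\beta\,?\,\alpha\,\P\,\alpha'\,?\,\gamma$ along all but one index) are therefore not automatically satisfied, and asserting that the ``$g$-th-column configuration ... allows exactly the kind of independent handle cancellation'' is not justified as written; you would need to re-establish those hypotheses for the hybrid, which you have not done. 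Second, the claim that residual intersections of $\alpha_i$ with $\gamma'_i$ ``are controlled via Lemma~\ref{lem:0edge_change}'' is a homological statement and does not control geometric intersections, so it cannot by itself produce an \emph{independently} canceling pair. Third, for the $\gamma'_i\P\beta'_i$ half, your argument requires the index where $\beta\disjoint\beta'$ occurs to equal $i$, forcing $\sigma(i)=i$; this does not follow from $l_\beta(\lambda)=1$ alone, and in fact the lemma is applied in Lemma~\ref{lemma:7} with $i=\sigma^{-1}(g)$ where no such coincidence is assumed. Finally, you acknowledge that the ``exhaustive bookkeeping'' remains to be done, so the argument is not actually complete. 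The efficient route is the one the paper takes: observe that $\gamma$, $\gamma'$ and their good partners $\alpha'$, $\beta'$ are in exactly the position of $\alpha$, $\alpha'$, $\beta$, $\gamma$ in Subsection~\ref{subsec:l_alpha=1}, and quote Proposition~\ref{prop:6} for the permuted diagram.
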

\begin{proof}
It follows from $l_\gamma(\lambda)=1$ and by the same argument as in Subsection~\ref{subsec:l_alpha=1} 
(cf. Proposition~\ref{prop:6}). 
\end{proof} 

\begin{lemma}
 \label{lemma:7}
$\beta_g\P\alpha_g\disjoint\alpha_g'\P\gamma_g\disjoint\gamma_g'\P\beta_g'\disjoint \beta_g$.
 \end{lemma}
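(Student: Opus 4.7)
The plan is to establish each link of the chain
\[
\beta_g \P \alpha_g \disjoint \alpha_g' \P \gamma_g \disjoint \gamma_g' \P \beta_g' \disjoint \beta_g
\]
by applying Proposition~\ref{prop:6} and Lemma~\ref{lem:around_disjoint} cyclically to the three type-$0$ edges of the realization loop. By Remarks~\ref{rmk:permutate_KD} and \ref{rmk:permutate_CS}, the roles of $(\alpha,\beta,\gamma)$ may be cyclically permuted, so analogs of these results apply to the $\beta$-$\beta'$ and $\gamma$-$\gamma'$ edges just as they do to the $\alpha$-$\alpha'$ edge.

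First I would apply Proposition~\ref{prop:6} directly to the $\alpha$-$\alpha'$ edge (whose good neighbors are $\beta$ and $\gamma$) to obtain $\beta_g \P \alpha_g \disjoint \alpha_g' \P \gamma_g$, covering the first three relations. Then Lemma~\ref{lem:around_disjoint}, or equivalently the analog of Proposition~\ref{prop:6} applied to the $\gamma$-$\gamma'$ edge (whose good neighbors are $\alpha'$ and $\beta'$), yields $\gamma_g \disjoint \gamma_g' \P \beta_g'$, once we verify that the unique disjoint index of the $\gamma$-$\gamma'$ edge in the ordering~(A8) is $g$. An entirely parallel treatment of the $\beta$-$\beta'$ edge, whose good neighbors are $\alpha$ and $\gamma'$, then supplies the final relation $\beta_g' \disjoint \beta_g$ and closes the chain.

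The main obstacle will be the index-alignment step, namely showing that the unique disjoint indices for the $\beta$-$\beta'$ and $\gamma$-$\gamma'$ edges both coincide with the disjoint index $g$ of the $\alpha$-$\alpha'$ edge in the fixed ordering (A4)--(A9). To handle this I plan to exploit that Step~1 already pins $\gamma_g$ to be parallel to $\alpha_g'$ and $\beta_g$ to be parallel to $\alpha_g$, so $\gamma_g$ and $\beta_g$ are already ``marked'' as the distinguished curves of their respective cut systems; any different choice of disjoint index would propagate through the permutation $\sigma$ in~(A9) together with the good pair conditions and force a configuration that either contradicts $\alpha_g \disjoint \alpha_g'$ from~(A4) or falls under the forbidden chain $\beta_i \P \alpha_i \P \alpha_i' \P \gamma_i$ of the first bullet of Proposition~\ref{prop:6}. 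Making this propagation precise, while simultaneously keeping track of three edges' reorderings, is the technical heart of the proof.
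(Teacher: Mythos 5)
Your plan correctly identifies that the first three links of the chain come directly from Proposition~\ref{prop:6} applied to the $\alpha$-$\alpha'$ edge, and you rightly flag the index-alignment step --- showing that the disjoint index for the $\beta$-$\beta'$ edge (and then for $\gamma$-$\gamma'$) coincides with $g$ --- as the crux. However, the ``propagation'' argument you sketch for that step is not a proof and, as far as I can tell, cannot be made into one by purely combinatorial bookkeeping with $\sigma$ and the goodness conditions. The configurations with $\beta_g'\P\beta_g$ do not in general run into the forbidden chain of the first bullet of Proposition~\ref{prop:6} or contradict $\alpha_g\disjoint\alpha_g'$; nothing in (A4)--(A9) alone rules them out.

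The paper handles exactly this point by contradiction with a genuinely geometric ingredient. Assuming $\beta_g'\P\beta_g$, it first deduces (via Lemma~\ref{lem:1} and Lemma~\ref{lem:around_disjoint}) that $\gamma'_{\sigma^{-1}(g)}\D\beta'_g$ and $\gamma_{\sigma^{-1}(g)}\P\gamma'_{\sigma^{-1}(g)}$, so that $\gamma_{\sigma^{-1}(g)}$ meets $\alpha_g$ transversely in one point. It then passes to a Kirby diagram obtained from $(\Sigma;\alpha,\beta,\gamma)$: under (A5) the dotted circles come from $\alpha_{g-k_1+1},\ldots,\alpha_g$; for $i\in\{g-k_1+1,\ldots,g-1\}$ Lemma~\ref{lem:1} forces $\beta_i\P\alpha_i\P\alpha'_i\D\gamma_i$, so $L^1_i\sqcup L^2_i$ are independently canceling pairs; and $L^1_g$ cancels with $L^2_{\sigma^{-1}(g)}$. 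Thus $X$ would be geometrically simply connected, contradicting (A2). This Kirby-diagram cancellation is the step your proposal is missing --- the index alignment does not follow from symmetry or reordering arguments alone, and you would need to supply this (or an equivalent) geometric contradiction to complete the proof.
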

 \begin{proof}
 By Proposition~\ref{prop:6}, we can assume $\beta_g\P\alpha_g\disjoint\alpha_g'\P\gamma_g$.
 Suppose $\beta_g'\P\beta_g$ to obtain a contradiction.
If $\gamma'_{\sigma^{-1}(g)}\P\beta_g'$, this contradicts Lemma~\ref{lem:1}. 
Hence we have $\gamma'_{\sigma^{-1}(g)}\D\beta'_g$. 
If $\gamma_{\sigma^{-1}(g)}\disjoint \gamma'_{\sigma^{-1}(g)}$, it contradicts Lemma~\ref{lem:around_disjoint}. 
Hence $\gamma_{\sigma^{-1}(g)}\P \gamma'_{\sigma^{-1}(g)}$. 
Note that $\gamma_{\sigma^{-1}(g)}$ and $\alpha_g$ intersect transversely at one point. 

Now we consider a Kirby diagram $L=L_1\sqcup L_2$ obtained from $(\Sigma; \alpha, \beta, \gamma)$.
From the assumption~(A5), 
the dotted circles in $L$ 
come only from $\alpha_{g-k_1+1},\ldots, \alpha_g$. 
By Lemma~\ref{lem:1}, we have $\beta_i\P\alpha_i\P\alpha_i'\D\gamma_i$ for $\{g-k_1+1,\ldots, g-1\}$.
Therefore, $L^1_i\sqcup L^2_i$ is an independently canceling pair for $i\in\{g-k_1+1,\ldots, g-1\}$, and $L^1_g$ can be canceled with $L^2_{{\sigma}^{-1}(g)}$.
Hence $X$ is geometrically simply connected.
This is a contradiction.
 \end{proof}

 \begin{lemma}
 \label{lemma:8}
For $i\in\{1, \ldots, g-1\}$, one of the following holds: 
\begin{enumerate}
\item $\sigma(i)=i$ and $\beta_i\P\alpha_i\D\gamma_i\D\beta_i$,
\item $\sigma(i)=i$ and $\beta_i\D\alpha_i\P\gamma_i\D\beta_i$,
\item $\sigma(i)=i$ and $\beta_i\D\alpha_i\D\gamma_i\P\beta_i$,
\item $\sigma(i)=i$ and $\beta_i\D\alpha_i\D\gamma_i\D\beta_i$, or 
\item $\sigma(i)\ne i$ and $\beta_i\D\alpha_i\D\gamma_i\D\beta_{\sigma(i)}\D\alpha_{\sigma(i)}\D\gamma_{\sigma(i)}\D\cdots$. 
\end{enumerate}
 \end{lemma}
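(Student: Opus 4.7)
The plan is to fix $i\in\{1,\ldots,g-1\}$ and decide, by case analysis, which patterns of the relations $\P$, $\D$ and $\disjoint$ can occur between the $i$-th curves of the six cut systems on the realization loop. Invoking Lemma~\ref{lemma:7}, the $\disjoint$-edges coming from (A4), (A7) and (A8) all sit at position $g$, so for $i<g$ we have $\alpha_i\P\alpha'_i$, $\beta'_i\P\beta_i$ and $\gamma'_i\P\gamma_i$. Since $\gamma'_g\P\beta'_g$ while curves in a single cut system are pairwise non-isotopic, the permutation $\sigma$ in (A9) must satisfy $\sigma(g)=g$; hence $\sigma(i)<g$ and $\beta'_{\sigma(i)}\P\beta_{\sigma(i)}$ for every $i<g$. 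Chaining the parallelisms through (A9), $\gamma_i$ is either $\P$ or $\D$ with $\beta_{\sigma(i)}$. Combined with (A5) and (A6), the data at index $i$ thereby reduces to the three binary relations $\beta_i$--$\alpha_i$, $\alpha_i$--$\gamma_i$, $\gamma_i$--$\beta_{\sigma(i)}$ together with the question whether $\sigma(i)=i$.

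If $\sigma(i)=i$ the three binary relations give $2^3=8$ a priori combinations. Proposition~\ref{prop:6} eliminates the one with $\beta_i\P\alpha_i\P\gamma_i$, and transitivity of $\P$ kills two more: for instance, $\beta_i\P\alpha_i$ together with $\gamma_i\P\beta_i$ and $\alpha_i\D\gamma_i$ would give $\gamma_i\P\alpha_i$, contradicting $\alpha_i\D\gamma_i$; symmetrically, $\beta_i\D\alpha_i\P\gamma_i\P\beta_i$ is impossible. The four remaining combinations are exactly cases (1)--(4).

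The case $\sigma(i)\neq i$ is the main obstacle: I need to show that every edge of the chain $\beta_i\D\alpha_i\D\gamma_i\D\beta_{\sigma(i)}\D\alpha_{\sigma(i)}\D\gamma_{\sigma(i)}\D\cdots$ is a $\D$. By the cyclic symmetry among $(\alpha,\beta,\gamma)$ (Remarks~\ref{rmk:permutate_KD} and \ref{rmk:permutate_CS}) it is enough to rule out $\beta_j\P\alpha_j$ for each $j$ in the $\sigma$-orbit of $i$. Supposing such a $\P$ occurs, the goodness of $(\alpha',\gamma)$ combined with $\alpha_j\P\alpha'_j$ (valid because $j<g$) forces the dotted circle $L^1_j$ in the Kirby diagram $L=L_1\sqcup L_2$ from $(\Sigma;\alpha,\beta,\gamma)$ to meet $L_2$ only along $L^2_j$; hence either $L^1_j$ is splittable in $L$, yielding an $S^1\times S^3$ summand and violating (A3), or $(L^1_j,L^2_j)$ is an independently canceling pair. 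Iterating this cancellation around the full $\sigma$-orbit, together with the forced cancellation at the special index $g$ coming from Lemma~\ref{lemma:7}, should eliminate every dotted circle and contradict (A2). The delicate point is the cumulative bookkeeping along the entire orbit and the interaction of these cancellations with $L^1_g$; this is where the real work of the lemma lies.
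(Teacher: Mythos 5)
Your analysis of the $\sigma(i)=i$ case is essentially the paper's: enumerate the eight sign patterns around the triangle, discard the inconsistent ones (those with a $\P$--chain returning to a $\D$) and discard the all-$\P$ case via Lemma~\ref{lem:1}/Proposition~\ref{prop:6}, leaving exactly (1)--(4). (Minor bookkeeping: $\beta_i\P\alpha_i\P\gamma_i\D\beta_i$ is also internally inconsistent, so three cases die by transitivity and one by Lemma~\ref{lem:1}, but the outcome is the same.)

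The $\sigma(i)\ne i$ case is where your proposal genuinely breaks down, and you acknowledge as much (``the delicate point is the cumulative bookkeeping \ldots this is where the real work of the lemma lies''), so the argument is simply not completed. More importantly, the strategy you sketch is headed in the wrong direction. You want to iterate independent handle cancellations around the $\sigma$-orbit and then invoke a ``forced cancellation at the special index $g$ coming from Lemma~\ref{lemma:7}'' to kill the last dotted circle. But Lemma~\ref{lemma:7} gives $\beta_g\P\alpha_g\disjoint\alpha'_g\P\gamma_g\disjoint\gamma'_g\P\beta'_g\disjoint\beta_g$: there is \emph{no} $\D$ anywhere in the $g$-th slot, so $L^1_g$ has no canceling partner, and removing it is precisely the hard part of Theorem~\ref{thm:1} that the paper handles \emph{after} this lemma via the analysis of $\langle\alpha_g,\gamma_i\rangle$ and the wave-moves of Subsections~4.2--4.3. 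In other words, you are trying to smuggle the rest of the proof of Theorem~\ref{thm:1} into the proof of Lemma~\ref{lemma:8}, and that global cancellation argument does not close.

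The paper's actual proof of the $\sigma(i)\neq i$ case is a short local intersection count and does not mention Kirby diagrams at all. It lists the eight patterns $\beta_i\,?\,\alpha_i\,?\,\gamma_i\,?\,\beta_{\sigma(i)}$ and observes: pattern (i) $\P\P\P$ contradicts Lemma~\ref{lem:1}; patterns (ii), (iii), (iv) each force $\beta_i$ and $\beta_{\sigma(i)}$ to intersect, impossible since they are disjoint members of one cut system; pattern (vii) $\D\D\P$ forces $\alpha_i$ to meet both $\beta_i$ and $\beta_{\sigma(i)}$, contradicting goodness of $(\alpha,\beta)$; and patterns (v), (vi) force $\gamma'_i$ to meet both $\beta'_i$ and $\beta'_{\sigma(i)}$, contradicting goodness of $(\beta',\gamma')$. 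Only $\D\D\D$ survives. This is the key observation your proposal is missing.

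One further small caveat: your justification that $\sigma(g)=g$ (``$\gamma'_g\P\beta'_g$ and curves in a cut system are pairwise non-isotopic'') only rules out $\gamma'_g\P\beta'_{\sigma(g)}$ for $\sigma(g)\neq g$; you also need to rule out $\gamma'_g\D\beta'_{\sigma(g)}$, for instance by noting that $\D$ gives nonzero algebraic intersection, which would transfer along the isotopy $\gamma'_g\P\beta'_g$ to a nonzero algebraic intersection of $\beta'_g$ with $\beta'_{\sigma(g)}$, impossible.
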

 \begin{proof}
Suppose $\sigma(i)=i$. 
The sequence $\beta_i\P\alpha_i\P\gamma_i\P\beta_{\sigma(i)}$ is contrary to Lemma~\ref{lem:1}. 
The sequences $\beta_i\P\alpha_i\P\gamma_i\D\beta_{\sigma(i)}$, $\beta_i\D\alpha_i\P\gamma_i\P\beta_{\sigma(i)}$ and $\beta_i\P\alpha_i\D\gamma_i\P\beta_{\sigma(i)}$ are impossible since $\beta_i=\beta_{\sigma(i)}$. 
Therefore, by Lemma~\ref{lemma:7}, one of (1)-(4) holds if $\sigma(i)=i$. 

Suppose $\sigma(i)\ne i$. 
By Lemma~\ref{lemma:7}, the following are the possible cases:
\begin{align*}
\text{(i)}~&\beta_i\P\alpha_i\P\gamma_i\P\beta_{\sigma(i)},
&\text{(ii)}~&\beta_i\D\alpha_i\P\gamma_i\P\beta_{\sigma(i)},\\
\text{(iii)}~&\beta_i\P\alpha_i\D\gamma_i\P\beta_{\sigma(i)},
&\text{(iv)}~&\beta_i\P\alpha_i\P\gamma_i\D\beta_{\sigma(i)},\\
\text{(v)}~&\beta_i\P\alpha_i\D\gamma_i\D\beta_{\sigma(i)},
&\text{(vi)}~&\beta_i\D\alpha_i\P\gamma_i\D\beta_{\sigma(i)},\\
\text{(vii)}~&\beta_i\D\alpha_i\D\gamma_i\P\beta_{\sigma(i)},
&\text{(viii)}~&\beta_i\D\alpha_i\D\gamma_i\D\beta_{\sigma(i)}.
\end{align*}
The case (i) is contrary to Lemma~\ref{lem:1}. 
If (ii), (iii) or (iv) holds, then $\beta_i$ and $\beta_{\sigma(i)}$ intersect. 
It is a contradiction. 
If (v) or (vi) holds, 
then $\gamma'_i$ intersects both $\beta'_i$ and $\beta'_{\sigma(i)}$, 
which is contrary to the goodness of $(\beta',\gamma')$. 
If (vii) holds, then $\alpha_i$ intersects both $\beta_i$ and $\beta_{\sigma(i)}$. 
It is also a contradiction. 
Therefore, the only case (viii) remains, and the proof is completed. 
 \end{proof}

We suppose that there exists $i\in\{1,\ldots, g-1\}$ such that $\beta_i\P\alpha_i\D\gamma_i\D\beta_i$ (cf. Lemma~\ref{lemma:8}-(1)). 
For any $j\in\{1,\ldots,g-1\}\setminus\{i\}$, 
$\gamma_i\cap\alpha_j$ and $\gamma_i\cap\beta_j$ are empty, 
but $\gamma_i\cap\alpha_g$ and $\gamma_i\cap\beta_g$ might be non-empty. 
Both $\alpha_i$ and $\beta_i$ do not intersect $\gamma$ other than $\gamma_i$. 
Hence $\alpha_i\cup\beta_i\cup\gamma_i$ is contained in a once punctured torus $T_i$, 
in which the other curves (namely $\alpha_g$ and $\beta_g$) lie as shown in the left in Figure~\ref{reddiagram}, for example. 
The curves $\alpha_g$ and $\beta_g$ can be modified so that they are disjoint from $\gamma_i$ 
by handle slides over $\alpha_i$ and $\beta_i$, respectively, 
see the right in the figure. 
 \begin{figure}[t]
 \centering
 \includegraphics[width=0.5\hsize]{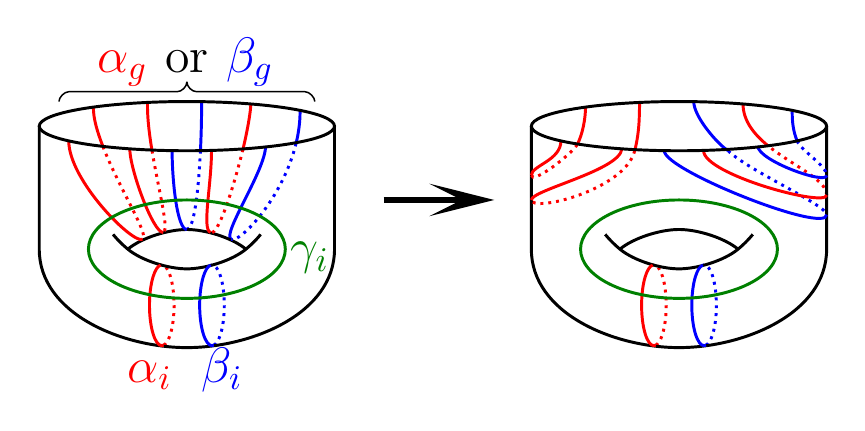}
 \caption{The once punctured torus $T_i$ and handle slides of $\alpha_g$ and $\beta_g$ over $\alpha_i$ and $\beta_i$.}
 \label{reddiagram}
 \end{figure}
By these handle slides, the curves in $\Sigma\setminus T_i$ do not change. 
Then $\alpha_g$ and $\beta_g$ can be isotoped into $\Sigma\setminus T_i$, 
and we can destabilize $\T$ by replacing $T_i$ with a $2$-disk 
(see \cite{GK} for details of the stabilization operation). 
The resulting diagram consists of 
an orientable surface of genus $g-1$ 
and three $(g-1)$-tuples $\tilde{\alpha},\tilde{\beta},\tilde{\gamma}$, 
where $\tilde{\alpha},\tilde{\beta}$ and $\tilde{\gamma}$ are obtained from $\alpha,\beta$ and $\gamma$, respectively, by removing $\alpha_i,\beta_i$ and $\gamma_i$. 
For this diagram, there exists a realization loop $\lambda'$ in the cut complex of 
the genus $g-1$ surface 
such that $l_{\tilde\alpha}(\lambda')=l_{\tilde\beta}(\lambda')=l_{\tilde\gamma}(\lambda')=1$. 

If there exists $i\in\{1,\ldots, g-1\}$ such that $\beta_i\D\alpha_i\P\gamma_i\D\beta_i$ or $\beta_i\D\alpha_i\D\gamma_i\P\beta_i$ (cf. Lemma~\ref{lemma:8}-(2),(3)), 
we can also destabilize the diagram similarly to the previous case. 
Therefore, we can assume in advance that the diagram $(\T;\alpha,\beta,\gamma)$ 
satisfies the following (cf. Lemma~\ref{lemma:8}-(4),(5)):
\begin{itemize}
\item[(A10)] If $\sigma(i)=i$, then $\beta_i\D\alpha_i\D\gamma_i\D\beta_i$. 
\item[(A11)] If $\sigma(i)\ne i$, then $\beta_i\D\alpha_i\D\gamma_i\D\beta_{\sigma(i)}\D\alpha_{\sigma(i)}\D\gamma_{\sigma(i)}\D\cdots$. 
\end{itemize}
\subsection{Algebraic intersection of $\alpha_g$ and curves of $\gamma$}
We fix orientations of the curves of $\alpha$, $\alpha'$ and $\gamma$. 
Note that, for $i\in\{1,\ldots,g-1\}$, 
\begin{align*}
0=\langle \gamma_g, \gamma_i \rangle
=\langle \alpha'_g, \gamma_i\rangle
=\sum_{j=1}^{g} \varepsilon_j \langle \alpha_j, \gamma_i \rangle
= \varepsilon_i + \varepsilon_g\langle \alpha_g, \gamma_i\rangle
\end{align*}
for some $\varepsilon_1,\ldots,\varepsilon_{g-1}\in\{-1,0,1\}$ and $\varepsilon_g\in\{-1,1\}$ by Lemma~\ref{lem:0edge_change}. 
Hence $\langle \alpha_g, \gamma_i\rangle$ is $\pm 1$ or $0$.

Let $L=L_1\sqcup L_2$ denote again a Kirby diagram obtained from $(\Sigma; \alpha, \beta, \gamma)$. 
Note that $L_1$ consists of a single dotted circle $L^1_g$.  
\begin{lemma}
\label{lem:epsilon_g=1}
If there exists $i\in \{1,\ldots, g-1\}$ such that $|\langle \alpha_g, \gamma_i\rangle|=1$, 
then $L^1_g$ and $L^2_i$ form a canceling pair in $L$.
 \end{lemma}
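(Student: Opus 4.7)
The plan is to translate the algebraic intersection hypothesis on $\Sigma$ into a linking number computation in $S^3$ between the dotted circle and the framed knot, from which the canceling pair structure follows.

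First, I would record that under the running assumptions (A10)--(A11) together with Lemma~\ref{lemma:7}, every $j\in\{1,\ldots,g-1\}$ satisfies $\beta_j\D\alpha_j$ while $\beta_g\P\alpha_g$. Hence $L_1$ consists of the single dotted circle $L^1_g$, and $L^1_g$ is an unknot in $S^3$ because $\alpha_g$ bounds an embedded disk $D\subset H_-$ coming from the cut system of $H_-$.

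Second, I would compute the linking number $\mathrm{lk}(L^1_g,L^2_i)$. Capping off $D$ with a thin annulus in $H_+$ that runs from $\alpha_g$ to the pushoff $L^1_g$ produces a disk $F\subset S^3$ with $\partial F=L^1_g$, transverse to $\Sigma$ along a pushoff of $\alpha_g$. Since $L^2_i=\gamma_i\subset\Sigma$, each transverse intersection on $\Sigma$ of $\gamma_i$ with $\alpha_g$ contributes a signed intersection of $\gamma_i$ with $F$ with the same local sign, so $\mathrm{lk}(L^1_g,L^2_i)=\pm\langle\alpha_g,\gamma_i\rangle=\pm1$ by hypothesis.

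Finally, having the unique dotted circle $L^1_g$ with $|\mathrm{lk}(L^1_g,L^2_i)|=1$, I would conclude that $(L^1_g,L^2_i)$ is a canceling pair in the sense of Subsection~\ref{subsec:Kirby_diag}. Indeed, the $1$-handlebody associated to $L^1_g$ is a single copy of $S^1\times B^3$, whose boundary is $S^1\times S^2$; the attaching curve of $L^2_i$, viewed in this boundary, represents $\pm 1$ in $H_1(S^1\times S^2)\cong\Z$, and the lightbulb trick isotopes it to the meridian $S^1\times\{\mathrm{pt}\}$ of $L^1_g$. Equivalently $L^1_g$ is isotopic to a meridian of $L^2_i$ in $S^3$, which is exactly the definition of a canceling pair. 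I expect the main subtlety to lie in this last step---justifying the lightbulb isotopy in the presence of the other framed components $L^2_j$---which is handled by first performing handle slides of each $L^2_j$ over $L^2_i$ to kill $\mathrm{lk}(L^1_g,L^2_j)$, after which the remaining isotopy is carried out within a complement that has become sufficiently simple.
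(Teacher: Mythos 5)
Your argument reaches the right conclusion, but it follows a genuinely different (and somewhat heavier) route than the paper, and it misses the geometric device that makes the paper's proof elementary. The paper never appeals to the lightbulb theorem in $S^1\times S^2$. Instead it uses the \emph{auxiliary} cut system $\alpha'$ (the one connected to $\alpha$ by the type-$0$ edge), which your proposal does not mention at all: taking meridian disks $D'_j$ for $\alpha'_j$ and cutting $H_-$ along $\bigcup_{j\ne i} D'_j$ produces a solid torus $V$. The point is that $\gamma_i$ is disjoint from every $\alpha'_j$ with $j\ne i$ (goodness of $(\alpha',\gamma)$), so $\gamma_i$ lies on the torus $\partial V$, and $\alpha_g$ (disjoint from all of $\alpha'$) bounds the meridian disk $D_g\subset V$. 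The hypothesis $|\langle\alpha_g,\gamma_i\rangle|=1$ then reduces to the elementary fact that two curves on a torus with algebraic intersection $\pm1$ can be isotoped to meet once, after which $L^2_i$ is pushed into $V$ to hit $D_g$ exactly once. Your version replaces this local picture with the global statement that a simple closed curve generating $H_1(S^1\times S^2)$ is isotopic to $S^1\times\{\mathrm{pt}\}$; that is true, but it is a deeper input than the paper needs. Your linking-number computation $\mathrm{lk}(L^1_g,L^2_i)=\pm\langle\alpha_g,\gamma_i\rangle$ is correct and is implicit in the paper's setup.

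One substantive correction: the closing handle slides you propose --- sliding each $L^2_j$ over $L^2_i$ to kill $\mathrm{lk}(L^1_g,L^2_j)$ --- are not needed, and including them suggests a misreading of the paper's terminology. The paper distinguishes a \emph{canceling pair} (the compressing disk for $L^1_g$ meets $L^2_i$ once but is allowed to meet other components) from an \emph{independently canceling pair} (the disk avoids the other components). Lemma~\ref{lem:epsilon_g=1} only claims the former, so the other framed components $L^2_j$ are permitted to hit the meridian disk; no cleanup of their linking numbers is required to conclude. Your plan to "first slide to make things simple" is harmless but is solving a problem the lemma does not pose, and if you were instead trying to upgrade to an independently canceling pair you would be proving something stronger than what is claimed.
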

 \begin{proof}
We first recall the construction of a Kirby diagram in Subsection~\ref{subsec:Kirby_diag}: 
we embed the surface $\Sigma$ into $S^3$, 
which is splitted into two handlebodies $H_{+}\cup H_{-}$ by $\Sigma$. 
Note that $\alpha$ and $\alpha'$ play roles of cut systems for $H_{-}$.

Let $D'_1, \ldots , D'_g\subset H_-$ be a meridian disk system of $H_{-}$ such that $\partial D'_j=\alpha'_j$ for $j\in\{1,\ldots,g\}$. 
Then $V=\overline{H_{-}\setminus N\left(\bigcup_{j\neq i} D'_j; H_{-}\right)}$ is a solid torus, 
and there exists a meridian disk $D_g$ of $V$ bounded by $\alpha_g$. 
Since $\gamma_i$ does not intersect $\alpha'_j$ for any $j\in\{1,\ldots, g\}\setminus\{i\}$, 
the component $L^2_i$ lies on $\partial V$. 
By $|\langle \alpha_g, \gamma_i\rangle|=1$, $L^2_i$ can be pushed into $V$ so that it intersects $D_g$ exactly once. 
This implies that $\alpha_g$ and $\gamma_i$ form a canceling pair in $L$. 
 \end{proof}
 
 By Lemma~\ref{lem:epsilon_g=1}, if there exists $i\in \{1,\ldots, g-1\}$ such that $|\langle \alpha_g, \gamma_i\rangle|=1$, $X$ is geometrically simply connected.
 This contradicts the assumption~(A2). 
 Hence, we can assume the following:
\begin{itemize}
 \item[(A12)]
$\langle \alpha_g, \gamma_i\rangle=0$ for any $i\in\{1,\ldots,g-1\}$. 
\end{itemize}
\subsection{Subarcs and wave-moves}
For convenience, let us introduce 
a positive subarc and a negative subarc of $\gamma_i$. 
 Choose $i\in\{1,\ldots,g-1\}$ and suppose that $\alpha_g\cap\gamma_i\neq\emptyset$. 
 We call the closure of a connected component of $\gamma_i \setminus \alpha_g$ a {\it subarc} of $\gamma_i$.
 A subarc $c$ of $\gamma_i$ is said to be {\it positive} if the signs of the intersections of $\gamma_i$ and $\alpha_g$ at $\partial c$ are the same, 
 and otherwise it is {\it negative}. 
 
 \begin{remark}\label{remark:least2}
If $\gamma_i$ has a subarc, 
then $\gamma_i$ has at least two negative subarcs.
 \end{remark}
 
We then introduce the {\it wave-move} that is a modification of a trisection diagram not changing the corresponding $4$-manifold. 
Let $\delta=(\delta_1,\ldots,\delta_g)$ be a cut system of $\Sigma$, 
and let $c$ be an embedded path in $\Sigma$ such that its interior does not intersect $\delta$ 
and its endpoints lie on a single curve of $\delta$, say $\delta_1$. 
Suppose that $c$ intersects either one of two annuli $N(\delta_1;\Sigma)\setminus \delta_1$. 
Such a path $c$ is called a {\it wave} of $\delta_1$. 
The simple closed curve $\delta_1$ is separated into two arcs $d$ and $d'$ by $\partial c$, 
and either $d\cup c$ or $d'\cup c$ is a simple closed curve null-homologous in $\Sigma/\bigcup_{i=2}^g \delta_i$. 
If $d\cup c$ is so, then the arcs $d$ and $d'$ are said to be {\it non-preferred} and {\it preferred} for $c$, respectively. 
It is clear that the simple closed curve $d'\cup c$ is isotopic into $\Sigma\setminus\bigcup_{i=1}^g \delta_i$. 
Then $\delta$ with $\delta_1$ replaced with $d'\cup c$ is also a cut system, 
and this replacement is called the {\it wave-move of $\delta$ along $c$} (see \cite{HOT80}). 
It is clear that the resulting cut system belongs to the same subgraph $\Gamma_\delta$ as $\delta$
in the cut complex $\mathcal{C}$. 
 
If there exists a negative subarc $c$ of $\gamma_i$ that does not intersect both $\alpha_i$ and $\beta_{\sigma(i)}$, 
then $c$ can be seen as a wave of $\alpha_g$ and $\beta_g$. 
Then we can perform the wave-moves of $\alpha$ and $\beta$ along $c$, 
so that the number of subarcs of $\gamma_i$ decreases. 
Since the total of subarcs of the curves of $\gamma$ is finite, and by Remark~\ref{remark:least2}, 
we can assume the following:
\begin{itemize}
 \item[(A13)]
For $i\in\{1,\ldots,g-1\}$, 
if $\gamma_i$ has a subarc, 
then $\gamma_i$ has exactly two negative subarcs each of which intersects either $\alpha_i$ or $\beta_{\sigma(i)}$. 
\end{itemize}

The following lemma says that $\gamma_i$ has no subarcs if $\sigma(i)=i$. 
 \begin{lemma}\label{lemma:13}
For $i\in\{1,\ldots,g-1\}$, if $\sigma(i)=i$, then $\gamma_i\cap \alpha_g=\emptyset$.
 \end{lemma}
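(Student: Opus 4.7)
My approach is by contradiction: assume $\sigma(i)=i$ and $\gamma_i\cap\alpha_g\neq\emptyset$ for some $i\in\{1,\ldots,g-1\}$. The plan is to show that the triangle $\alpha_i\D\beta_i\D\gamma_i\D\alpha_i$ can be isolated as a $\C P^2$-block (possibly after handle slides), forcing $X\cong Y\#\C P^2$ up to orientation, and then to induct on the trisection genus to contradict (A2) or (A3).

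First I pin down the subarc structure. By (A10), $\alpha_i,\beta_i,\gamma_i$ meet pairwise in unique points $p_\alpha=\alpha_i\cap\gamma_i$ and $p_\beta=\beta_i\cap\gamma_i$, and by (A12) the cardinality $|\gamma_i\cap\alpha_g|$ is even and at least $2$. By (A13) there are exactly two negative subarcs of $\gamma_i$, each meeting $\alpha_i\cup\beta_{\sigma(i)}=\alpha_i\cup\beta_i$. The goodness of $(\alpha,\beta)$ and $(\gamma,\alpha')$, combined with $\alpha_j\P\alpha'_j$ for $j<g$, forces $\gamma_i\cap(\alpha_i\cup\beta_i)=\{p_\alpha,p_\beta\}$; hence one negative subarc $c_\alpha$ contains $p_\alpha$ and the other $c_\beta$ contains $p_\beta$. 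I then use the half of $c_\alpha$ joining $\alpha_g$ to $\alpha_i$ at $p_\alpha$ as a band for a handle slide of $\alpha_g$ over $\alpha_i$, producing $\hat\alpha_g\in\Gamma_\alpha$ with $|\hat\alpha_g\cap\gamma_i|<|\alpha_g\cap\gamma_i|$ and still disjoint from each $\alpha_j$ ($j<g$); analogously I slide $\beta_g$ over $\beta_i$ along $c_\beta$. Iterating yields cut systems $\hat\alpha,\hat\beta$ with $\hat\alpha_g\cap\gamma_i=\emptyset=\hat\beta_g\cap\gamma_i$, while the triangle $\alpha_i\D\beta_i\D\gamma_i\D\alpha_i$ remains intact.

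After further handle slides of the remaining $\gamma_j$ ($j\neq i$) over $\gamma_i$ to isotope them out of a regular neighborhood $T_i=N(\alpha_i\cup\beta_i\cup\gamma_i;\Sigma)$, the once-punctured torus $T_i$ contains exactly three pairwise-transverse cut-system curves and no others. This is the genus-$1$ block of a trisection of $\C P^2$ (or $\overline{\C P^2}$), so the global trisection of $X$ splits as a connected-sum decomposition $X\cong Y\#\C P^2$ (up to orientation), with $L_Y\le L_X\le 3$ by the subadditivity of the Kirby--Thompson invariant. Since $\C P^2$ is geometrically simply connected and contains no $S^1\times S^3$ summand, $Y$ also satisfies (A2) and (A3), and induction on the trisection genus drives the contradiction.

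The main obstacle is the isolation of $T_i$: verifying that the handle slides performed on the remaining cut-system curves can be absorbed into the realization loop without lengthening it past $L_{X,\T}=3$. This is the technical heart of the argument and should follow a case analysis in the spirit of Lemmas~\ref{lem:2}--\ref{lem:5} and the destabilization discussion preceding (A10), now executed in the $\sigma(i)=i$, pairwise-transverse regime.
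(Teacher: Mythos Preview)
Your proposal takes a fundamentally different route from the paper and has genuine gaps.

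The paper's proof is a short, direct topological contradiction. Assuming $\gamma_i\cap\alpha_g\ne\emptyset$, it uses (A13) to name the two negative subarcs $c$ (through $p_\alpha$) and $c'$ (through $p_\beta$). Because $c$ and $c'$ are the only negative subarcs, they must approach $\alpha_g$ from \emph{opposite} sides. But treating $c$ as a wave of $\beta_g\;(\P\alpha_g)$, the non-preferred half $b$ of $\beta_g$ satisfies $[b\cup c]=[\beta_i]$, so there is a subsurface $\tilde\Sigma$ with $\partial\tilde\Sigma=\beta_i\sqcup(b\cup c)$. Since $c'$ crosses $\beta_i$, one endpoint of $c'$ lies on $b$, forcing $c$ and $c'$ to lie on the \emph{same} side of $\alpha_g$. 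That is the whole argument; no destabilization, no summand-splitting, no induction.

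Your approach has two concrete problems. First, the inequality you invoke is backwards: subadditivity gives $L_X=L_{Y\#\C P^2}\le L_Y+L_{\C P^2}=L_Y$, so you get $L_X\le L_Y$, not $L_Y\le L_X$. To actually bound $L_Y$ by $3$ you would need to exhibit a loop for the destabilized diagram, which is precisely the step you flag as open. Second, that open step is not a technicality: the slides you perform on $\alpha_g,\beta_g$ and then on the $\gamma_j$'s are type-$0$ moves that do not obviously commute with the six good-pair vertices of $\lambda$, and there is no reason to expect the new cut systems to sit at distance $\le1$ from the corresponding $\beta',\gamma'$ endpoints. The paper avoids this entirely because its argument never modifies the realization loop. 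Finally, your ``induction on the trisection genus'' would have to be an induction for Theorem~1 itself (you cannot induct on Lemma~\ref{lemma:13} in isolation), which would require restructuring the whole proof.
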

 \begin{proof}
Fix $i\in\{1,\ldots,g-1\}$. 
Suppose $\sigma(i)=i$ and $\gamma_i\cap \alpha_g\neq \emptyset$ to obtain a contradiction. 
Recall the assumption~(A10), which says that 
$\gamma_i$ intersects each of $\alpha_i$ and $\beta_{i}$ exactly once. 

By the assumption (A13), there exist negative subarcs $c$ and $c'$ of $\gamma_i$ that intersect $\alpha_i$ and $\beta_i$, respectively. 
The other subarcs of $\gamma_i$ are positive. 
Then $c$ and $c'$ must lie in the opposite sides of $\alpha_g$. 
More precisely, $c$ and $c'$ can not intersect the same annulus of $N(\alpha_g; \Sigma)\setminus \alpha_g$. 
We suppose that $\alpha_g$ and $\beta_g$ are the same curve in $\Sigma$, 
then $c$ can be regarded as a wave of $\beta_g$. 
Let $b$ be one of the component $\beta_g\setminus\partial c$, 
and suppose that it is non-preferred. 
It is easy to see that $b\cup c$ is a simple closed curve homologous to $\beta_i$. 
Then there exists a subsurface $\tilde\Sigma$ in $\Sigma$ with $\partial\tilde\Sigma=\beta_i\sqcup(b\cup c)$ after slightly perturbing the curves. 
Since $c'$ intersects $\beta_i$, one of the endpoints of $c'$ is contained in $b$. 
Hence, $c$ and $c'$ lie in the same side of $\alpha_g$, 
which is a contradiction. 
 \end{proof}
We finally give the proof of Theorem~1. 
In the proof below, 
we will find wave-moves modifying $\alpha_g$ and $\beta_g$ so that they do not intersect any curve of $\gamma$. 
If we can find such moves, 
it is contrary to the assumption (A3), and the proof will be completed. 
By Lemma~\ref{lemma:13}, what we remain to consider is the case $\sigma(i)\neq i$. 
 \begin{proof}[Proof of Theorem~1]
Fix $i\in\{1,\ldots,g-1\}$ with $\sigma(i)\neq i$. 
Suppose that $\gamma_i$ has a subarc $c$ intersecting $\alpha_i$, and then $c$ is negative. 
We also suppose that $\alpha_g$ and $\beta_g$ are the same curve in $\Sigma$, and we 
regard $c$ as a wave of $\beta_g$. 
Let $b$ be one of the component $\beta_g\setminus\partial c$, and suppose that it is non-preferred. 
Then there exists a subsurface $\tilde\Sigma$ in $\Sigma$ with $\partial\tilde\Sigma=\beta_i\sqcup(b\cup c)$. 
Note that there exists a subarc $c'$ of $\gamma_{\sigma^{-1}(i)}$ intersecting $\beta_i$, 
and hence one of the endpoints of $c'$ is on $b$. 
Set $c''=\partial(\tilde\Sigma\cup N(\alpha_i\cup\beta_i;\Sigma))\setminus\mathrm{Int}\,b$. 
Then $c''$ is an arc intersecting curves of $\gamma$ exactly once with $\gamma_{\sigma^{-1}(i)}$, 
and it is a wave of $\alpha_g$ and $\beta_g$. 
Hence the wave-moves of $\alpha_g$ and $\beta_g$ along $c''$ do not create new subarcs, 
and the number of subarcs decreases. 
Since the total of subarcs is finite, 
we can modify $\alpha_g$ and $\beta_g$ so that they do not intersect any curve of $\gamma$. 
This implies that $X$ contains $S^1\times S^3$ as a connected summand, which is contrary to (A3). 
 \end{proof}
\section{Theorem~\ref{thm:2} and the spins of $3$-manifolds}
\label{sec:Thm2}
In this section, we prove Theorem~\ref{thm:2} and 
determine the exact value of $L_{\mathcal{S}(L(2, 1))}$ 
as an application of Theorem~\ref{thm:2}. 
\subsection{Proof of Theorem~\ref{thm:2}}
Let $X$ be a closed connected oriented $4$-manifold.
Let $L=L_1\sqcup L_2$ be a Kirby diagram obtained from a $(g;k_1,k_2,k_3)$-trisection diagram $\mathcal{T} = (\Sigma; \alpha, \beta, \gamma)$ of $X$, 
where $L_1$ consists of $k_1$ dotted circles each $L^1_i$ of which corresponds to $\alpha_i$ such that $\beta_i\P\alpha_i$, 
and $L_2$ consists of $g$ framed knots corresponding to $\gamma_1,\ldots,\gamma_g$ (see Section 2).
It is enough to prove the following statement that is equivalent to Theorem~\ref{thm:2}.

\begin{theorem}[Theorem~\ref{thm:2}]
\label{thm:2:Section5}
Suppose that $X$ does not contain $S^1 \times S^3$ as a connected summand.
If $L_X \leq 5$, then $\pi_1(X)$ is isomorphic to the trivial group or the infinite cyclic group.
\end{theorem}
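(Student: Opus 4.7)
The plan is to proceed by contradiction, closely paralleling the strategy of Sections~\ref{sec:l_alpha=0and1} and \ref{sec:Thm1} but with an enlarged combinatorial budget. Suppose $\T=(\Sigma;\alpha,\beta,\gamma)$ realizes $L_{X,\T}\le 5$ and let $\lambda$ be a realization loop. By Theorem~\ref{thm:1} we may restrict to $L_X\in\{4,5\}$, since $L_X\le 3$ together with the no-$S^1\times S^3$ hypothesis forces $X$ to be geometrically simply connected and hence $\pi_1(X)=1$. By Proposition~\ref{prop:l_alpha=0} and its cyclic analogues for $\beta,\gamma$ (via Remark~\ref{rmk:permutate_CS}), none of $l_\alpha(\lambda),l_\beta(\lambda),l_\gamma(\lambda)$ can vanish. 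Using the threefold symmetry, one may relabel so that $l_\alpha(\lambda)\le l_\beta(\lambda)\le l_\gamma(\lambda)$; the only new possibilities beyond the $(1,1,1)$ case handled by Theorem~\ref{thm:1} are $(l_\alpha,l_\beta,l_\gamma)=(1,1,2),(1,1,3),(1,2,2)$.

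I would then fix a Kirby diagram $L=L_1\sqcup L_2$ obtained from $\T$ as in Subsection~\ref{subsec:Kirby_diag}, order the pair $\alpha,\alpha'$ as in Subsection~\ref{subsec:l_alpha=1}, and walk along the additional type-$0$ edges in $\Gamma_\beta$ and $\Gamma_\gamma$, tracking at each step which single curve gets modified. Each such edge furnishes, by Lemma~\ref{lem:0edge_change}, a homological relation carrying a $\pm 1$ coefficient at the modified index and coefficients in $\{-1,0,1\}$ elsewhere. Iterating the $\P/\D$-pattern analysis of Lemmas~\ref{lem:1}--\ref{lem:5} and Proposition~\ref{prop:6} against each modified cut system should reduce each case to a finite residue. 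Patterns producing a splittable dotted circle contradict the no-$S^1\times S^3$ hypothesis; patterns admitting a complete set of independently canceling pairs give a Kirby diagram without any dotted circles, so $X$ is geometrically simply connected and $\pi_1(X)=1$.

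In the remaining residual scenarios a single dotted circle $L^1_g$ survives, coming from the curve $\alpha_g$ altered by the unique $\alpha$-edge. To pin $\pi_1(X)$ to $\Z$ (rather than to some $\Z/n$ with $n\ge 2$) the plan is to show that the algebraic intersection $\langle\alpha_g,\gamma_i\rangle$ vanishes for every index $i$ corresponding to a surviving $2$-handle. This is obtained by iterating Lemma~\ref{lem:0edge_change} along the extra type-$0$ edges: each expresses a modified $\gamma$- or $\beta$-curve as an integer combination (with coefficients in $\{-1,0,1\}$) of the previous ones, so its pairing with $\alpha_g$ is controlled inductively. A nonzero winding number would produce, via Lemma~\ref{lem:epsilon_g=1} or a small extension thereof, a canceling $2$-handle, reducing further to $\pi_1(X)=1$. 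Otherwise the surviving presentation is generated by the meridian of $L^1_g$ subject only to relations with trivial exponent sum in that generator, and hence $\pi_1(X)=\Z$.

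The main obstacle lies in the cases $(1,1,3)$ and $(1,2,2)$: several type-$0$ edges outside $\Gamma_\alpha$ modify multiple $\gamma$- (respectively $\beta$-) curves along $\lambda$, and the wave-move argument from Section~\ref{sec:Thm1} used to clear the geometric intersections of $\alpha_g$ with $\gamma$ must be iterated carefully so that each reduction preserves the analogues of assumptions (A10)--(A13). The subtle point is ensuring that partial wave-moves do not merely shuffle geometric intersections between different $\gamma_j$'s, and that the inductive control on the coefficients $\varepsilon_i\in\{-1,0,1\}$ ultimately forces every surviving $2$-handle to have trivial winding against the meridian of the sole dotted circle. This final step is exactly what pins $\pi_1(X)$ down to $\{1\}$ or $\Z$, completing the contradiction with the hypothesis that $\pi_1(X)$ is neither trivial nor infinite cyclic.
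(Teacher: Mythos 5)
There is a genuine gap, and it stems from missing the key simplification that makes the paper's argument short. You correctly note that at least one of $l_\alpha,l_\beta,l_\gamma$ is at most $1$ and reduce (after the $l_\alpha=0$ case, which you correctly dispose of) to $l_\alpha=1$. But from there you enumerate $(1,1,2),(1,1,3),(1,2,2)$ and plan to "walk along the additional type-$0$ edges in $\Gamma_\beta$ and $\Gamma_\gamma$," iterating the wave-move machinery from Section~\ref{sec:Thm1}. None of that is needed, and you explicitly flag that you have not resolved the difficulties it creates (that wave-moves might just shuffle intersections around, and that you would need analogues of (A10)--(A13) to survive each reduction). The paper never touches the $\beta$- and $\gamma$-sides of $\lambda$ at all: once $l_\alpha(\lambda)\le 1$, one picks a single vertex $\beta\in\Gamma_\beta$ good for $\alpha$ and a single vertex $\gamma\in\Gamma_\gamma$ good for $\alpha'$, and by Remark~\ref{rmk:permutate_CS} the tuple $(\Sigma;\alpha,\beta,\gamma)$ is still a trisection diagram of $X$. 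The structure of $\Gamma_\beta\cap\lambda$ and $\Gamma_\gamma\cap\lambda$ is then irrelevant.

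From that point the proof is purely algebraic. After discarding the independently canceling pairs supplied by the goodness of $(\alpha',\gamma)$, the Kirby diagram retains a single dotted circle $L^1_g$, and Lemma~\ref{lem:0edge_change} applied to the one type-$0$ edge $\alpha\leftrightarrow\alpha'$ gives $\langle\alpha_g,\gamma_i\rangle=\varepsilon_i\in\{-1,0,1\}$ for every surviving $2$-handle. The resulting one-generator presentation $\bigl\langle \alpha_g \mid \alpha_g^{\varepsilon_i}\bigr\rangle$ is trivial (if some $\varepsilon_i=\pm1$) or infinite cyclic (if all are $0$); there is no need to invoke Lemma~\ref{lem:epsilon_g=1}, wave-moves, or any geometric cancellation argument. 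Your plan instead tries to force all linking numbers to vanish, which is not what the theorem requires and is not achievable in general. In short: the reduction to one dotted circle and the $\{-1,0,1\}$ bound on linking numbers already pin $\pi_1(X)$ down to $\{1\}$ or $\Z$; the additional structure you try to impose on the $\beta$- and $\gamma$-legs of the loop is both unnecessary and the source of the unresolved obstacles you report.
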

\begin{proof}
Let $\lambda$ be a realization loop of $\mathcal{T}$.
If $L_X \leq 5$, then at least one of $l_{\alpha}(\lambda), l_{\beta}(\lambda)$ or $l_\gamma(\lambda)$ is less than or equal to $1$.
We suppose that $l_{\alpha}(\lambda) \leq 1$ without loss of generality.
The case of $l_{\alpha}(\lambda) = 0$ is showed by Proposition \ref{prop:l_alpha=0}.
We only consider the case of $l_{\alpha}(\lambda) = 1$.
Let $\alpha = (\alpha_1, \dots, \alpha_g)$ and $\alpha' = (\alpha'_1, \dots, \alpha'_g)$ be the vertices in $\Gamma_\alpha \cap \lambda$, which are connected by exactly one type-$0$ edge. 
Let $\beta = (\beta_1, \dots, \beta_g)$ and $\gamma = (\gamma_1, \dots, \gamma_g)$ be vertices in $\Gamma_\beta \cap \lambda$ and $\Gamma_\gamma \cap \lambda$, respectively, such that $(\beta, \alpha)$ and $(\gamma, \alpha')$ are good. 
We suppose that $\alpha_i \P \alpha'_i$ for $i \in \{ 1, \dots, g-1 \}$ and $\alpha_g \disjoint \alpha'_g$.
For $i \in \{ 1, \dots, g-1 \}$, either one of the following holds:
\[
\beta_i \P \alpha_i \P \alpha'_i \P \gamma_i,\quad
\beta_i \P \alpha_i \P \alpha'_i \D \gamma_i,\quad
\beta_i \D \alpha_i \P \alpha'_i \P \gamma_i,\quad\text{or}\quad
\beta_i \D \alpha_i \P \alpha'_i \D \gamma_i.
\]
If there exists $i \in \{1, \dots ,g-1\}$ such that $\beta_i \P \alpha_i \P \alpha'_i \P \gamma_i$, 
then $X$ has $S^1 \times S^3$ as a connected summand by Lemma~\ref{lem:1}. 
It is a contradiction, so we will suppose in the rest of the proof that 
$\beta_i \P \alpha_i \P \alpha'_i \P \gamma_i$ does not hold for any $i \in \{1, \dots ,g\}$. 

The following four cases about $g$-th curves are possible: 
\[
\beta_g\P\alpha_g\disjoint\alpha'_g\D\gamma_g,\quad
\beta_g\D\alpha_g\disjoint\alpha'_g\P\gamma_g,\quad
\beta_g\D\alpha_g\disjoint\alpha'_g\D\gamma_g,\quad\text{or}\quad
\beta_g\P\alpha_g\disjoint\alpha'_g\P\gamma_g. 
\]
In the first three cases, we have $\pi_1(X)\cong\{1\}$ by Lemmas~\ref{lem:2}, \ref{lem:3} and \ref{lem:4}. 

Suppose $\beta_g\P\alpha_g\disjoint\alpha'_g\P\gamma_g$ and 
set $\mathcal{I}=\{i\mid\beta_i \P \alpha_i \P \alpha'_i \D \gamma_i\}$. 
Note that $L$ contains a dotted circle $L^1_i$ if $i\in\mathcal{I}$. 
By the goodness of $(\alpha',\gamma)$, the pair of $L^1_i$ and $L^2_i$ is an independently canceling pair for $i\in\mathcal{I}$. 
After removing all such pairs from $L$, the remaining dotted circle is only $L^1_g$. 

We give orientations to the curves in $\alpha, \alpha'$ and $\gamma$ so that $\langle \alpha'_i, \gamma_i \rangle \geq 0$ for $i \in \{1, \dots, g \}$.
By Lemma \ref{lem:0edge_change}, for any $i \in \{1, \dots, g\}$,
\begin{align*}
\langle \alpha_g, \gamma_i \rangle 
=  \sum_{j=1}^{g}\varepsilon_j\langle \alpha'_j , \gamma_i \rangle = \varepsilon_i,
\end{align*}
for some $\varepsilon_1, \dots, \varepsilon_{g-1} \in \{-1, 0, 1\}$ and $\varepsilon_{g} \in \{-1, 1\}$.
Note that $\langle \alpha_g, \gamma_i \rangle=\varepsilon_i$ coincides with the linking number of $L^{1}_g$ and $L^{2}_i$.
Therefore, we have 
\[
\pi_1(X) \cong \left\langle \alpha_g ~\middle|~ \alpha_g^{\varepsilon_i} ,\ i\in\{1,\ldots,g\}\setminus\mathcal{I} \right\rangle,
\]
which is trivial or infinite cyclic.
\end{proof}

\subsection{The spin of a $3$-manifold and its Kirby-Thompson invariant}
\label{subsec:spin}
We review the spin of a $3$-manifold. 
Let $M$ be a closed connected oriented $3$-manifold and $\mathring{M}$ the closure of $M \setminus D^3$. 
We call $\partial(\mathring{M} \times D^2)$ the {\it spin} of $M$, which is denoted by $\mathcal{S}(M)$.
In~\cite{M}, Meier constructed a trisection and a trisection diagram of $\mathcal{S}(M)$ by using a Heegaard splitting of $M$.
Here we focus on the spin $\mathcal{S}(L(p, q))$ of the lens space $L(p,q)$.
Kirby and Thompson gave an upper bound $6$ for $L_{\mathcal{S}(L(2, 1))}$ in~\cite{KT}, and we can easily obtain the following by using their technique. 
\begin{proposition}[cf. \cite{KT}]\label{prop:KirbyThompson}
For any coprime integers $p$ and $q$ with $p \geq 2$, $L_{\mathcal{S}(L(p, q))} \leq 3p$.
\end{proposition}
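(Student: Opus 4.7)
The plan is to exhibit explicitly a trisection diagram $\T$ of $\mathcal{S}(L(p,q))$ together with a loop $\lambda$ in its cut complex $\mathcal{C}$ satisfying $l_\alpha(\lambda) = l_\beta(\lambda) = l_\gamma(\lambda) = p$, from which
\[
L_{\mathcal{S}(L(p,q))} \leq L_{X,\T} = l_\alpha(\lambda) + l_\beta(\lambda) + l_\gamma(\lambda) = 3p
\]
follows immediately from the definition of the Kirby--Thompson invariant. This generalizes the case $p = 2$ treated in \cite{KT}, where the exhibited loop has total length $6 = 3 \cdot 2$.

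First, I would recall Meier's construction from \cite{M}: starting from the standard genus-one Heegaard diagram of $L(p,q)$, whose two defining curves intersect $p$ times on the torus, Meier produces an explicit trisection diagram $\T = (\Sigma; \alpha, \beta, \gamma)$ of the spin $\mathcal{S}(L(p,q))$. The genus of $\Sigma$ is bounded independently of $p$, but the $p$-fold intersection of the Heegaard curves is encoded into $p$-fold intersections of certain curves among $\alpha, \beta, \gamma$.

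Second, I would construct $\lambda$ as a concatenation of three arcs, one of length $p$ lying in each of the subgraphs $\Gamma_\alpha, \Gamma_\beta, \Gamma_\gamma$. Although $\alpha$ is automatically good for $\beta$ in Meier's construction, it is not good for $\gamma$; the obstruction is precisely the $p$-fold twist coming from the Heegaard diagram. To remedy this, I would perform a sequence of $p$ type-$0$ edges, each one an ``unwinding'' handle slide of a single curve of the cut system, to pass from $\alpha$ to a cut system $\alpha' \in \Gamma_\alpha$ that is good for some element of $\Gamma_\gamma$. By the cyclic symmetry of the data, analogous sequences of length $p$ connect the relevant vertices inside $\Gamma_\beta$ and $\Gamma_\gamma$. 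Finally, I would verify that the six endpoints of these three arcs may be paired up into the three good pairs $(\alpha_\beta, \beta_\alpha)$, $(\beta_\gamma, \gamma_\beta)$, $(\gamma_\alpha, \alpha_\gamma)$ required by the definition of a loop, completing the construction of $\lambda$.

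The main obstacle is the explicit combinatorial description, for arbitrary $p$, of a length-$p$ sequence of cut systems in $\Gamma_\alpha$ joining a $\beta$-good one to a $\gamma$-good one via type-$0$ edges only. For $p = 2$ this sequence was drawn in \cite{KT}; for general $p$ it should arise by iterating the same unwinding step once for each of the $p$ intersections of the defining Heegaard curves of $L(p,q)$. Once the three arcs are explicitly written down and verified to be type-$0$ paths in $\mathcal{C}$, the bound $L_{\mathcal{S}(L(p,q))} \leq 3p$ is immediate.
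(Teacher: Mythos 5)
Your proposal follows essentially the same route the paper has in mind: take Meier's spun trisection of $\mathcal{S}(L(p,q))$ (of genus $3$, independent of $p$) and build a realization loop by generalizing the length-$6$ loop Kirby--Thompson exhibited for $p=2$, unwinding one Heegaard-curve intersection per type-$0$ edge in each of $\Gamma_\alpha, \Gamma_\beta, \Gamma_\gamma$. The paper itself does not write out the $p$-step sequences either---it simply states that the bound follows from the technique of \cite{KT}---so your sketch is at the same level of explicitness as the text, and the open point you flag (producing the explicit length-$p$ type-$0$ path in each subgraph) is precisely what a fully detailed proof would need to supply.
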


By applying Theorem~\ref{thm:2:Section5}, we have $L_{\mathcal{S}(L(p, q))} \geq 6$ since 
$\pi_1(\mathcal{S}(L(p, q))) \cong \mathbb{Z} / p\mathbb{Z}$.
In particular, we have shown the following. 
\begin{corollary}
\label{cor:S(L(2,1))}
$L_{\mathcal{S}(L(2,1))} = 6$. 
\end{corollary}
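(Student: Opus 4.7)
The plan is to sandwich $L_{\mathcal{S}(L(2,1))}$ between matching upper and lower bounds of $6$, each supplied by a result already established in the paper. For the upper bound, I would simply specialize Proposition~\ref{prop:KirbyThompson} to the case $p=2$, $q=1$, which yields $L_{\mathcal{S}(L(2,1))} \leq 3\cdot 2 = 6$ immediately with no extra work.

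For the lower bound I would invoke Theorem~\ref{thm:2:Section5} in its contrapositive form: if $X$ has no $S^1\times S^3$ summand and $\pi_1(X)$ is neither trivial nor infinite cyclic, then $L_X\geq 6$. Two hypotheses need checking. First, since spinning a closed $3$-manifold does not change the fundamental group, $\pi_1(\mathcal{S}(L(2,1))) \cong \pi_1(L(2,1)) \cong \mathbb{Z}/2\mathbb{Z}$, which is clearly neither trivial nor infinite cyclic. Second, I need to confirm that $\mathcal{S}(L(2,1))$ contains no $S^1\times S^3$ connected summand; this is where Van Kampen steps in, as any splitting $X \cong Y\#(S^1\times S^3)$ yields $\pi_1(X) \cong \pi_1(Y) \ast \mathbb{Z}$, which is necessarily infinite, contradicting $|\pi_1(\mathcal{S}(L(2,1)))|=2$. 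Thus Theorem~\ref{thm:2:Section5} applies and gives $L_{\mathcal{S}(L(2,1))} \geq 6$.

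Combining the two bounds produces $L_{\mathcal{S}(L(2,1))} = 6$, as desired. The main obstacle is essentially non-existent once the heavy machinery of Theorem~\ref{thm:2:Section5} and Proposition~\ref{prop:KirbyThompson} has been established: the entire corollary reduces to plugging in $p=2$, reading off the fundamental group of a spun lens space, and executing the one-line Van Kampen check above. The only point where one must be slightly careful is not to forget the $S^1\times S^3$-summand hypothesis of Theorem~\ref{thm:2:Section5}, since without that verification the lower bound would not be available.
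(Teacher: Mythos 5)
Your proof is correct and follows essentially the same route as the paper: the upper bound comes from specializing Proposition~\ref{prop:KirbyThompson} to $p=2$, and the lower bound from Theorem~\ref{thm:2:Section5} since $\pi_1(\mathcal{S}(L(2,1)))\cong\mathbb{Z}/2\mathbb{Z}$ is neither trivial nor infinite cyclic. Your explicit Van Kampen verification that $\mathcal{S}(L(2,1))$ has no $S^1\times S^3$ connected summand (finiteness of $\pi_1$ rules out any free-product factor of $\mathbb{Z}$) is a detail the paper leaves implicit, and it is a reasonable addition.
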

\section{Proof of Theorem~3}
\label{sec:Thm3}
This section is devoted to proving Theorem~\ref{thm:3}. 
\subsection{Setting of cut systems}
We start with giving some notations and assumptions on cut systems for the proof of Theorem~\ref{thm:3}. 
Let $X$ be a closed connected oriented $4$-manifold with $2\leq|H_1(X)|<\infty$. 
Let $\T=(\Sigma;\alpha,\beta,\gamma)$ be a $(g;k_1,k_2,k_3)$-trisection diagram and 
$\lambda$ a realization loop of $\mathcal{T}$. 
Set $p=|H_1(X)|$. 
Here we can assume $g\geq3$ by the classification in \cite{MZ17} (cf. \cite{MSZ16}). 
Suppose that $l_\alpha(\lambda)=\min\{l_\alpha(\lambda),l_\beta(\lambda),l_\gamma(\lambda)\}$ 
and that $\alpha$ is good for $\beta$. 
Here we can assume that $l_\alpha(\lambda)\geq1$ by Proposition~\ref{prop:l_alpha=0}. 
We write $m=l_\alpha(\lambda)$ and $k=k_1$ for the convenience. 
The intersection $\Gamma_\alpha\cap\lambda$ contains $m+1$ vertices, 
which will be denoted by $\alpha^{(0)}, \alpha^{(1)},\ldots, \alpha^{(m)}$. 
Here $\alpha^{(m)}=\alpha$, and 
$\alpha^{(h-1)}$ and $\alpha^{(h)}$ are connected by exactly one \mbox{type-$0$} edge for each $h\in\{1,\ldots,m\}$. 
We also suppose that $\alpha^{(0)}$ and $\gamma$ are good. 
See Figure~\ref{fig:Thm3}. 
\begin{figure}[t]
\centering
\includegraphics[width=73mm]{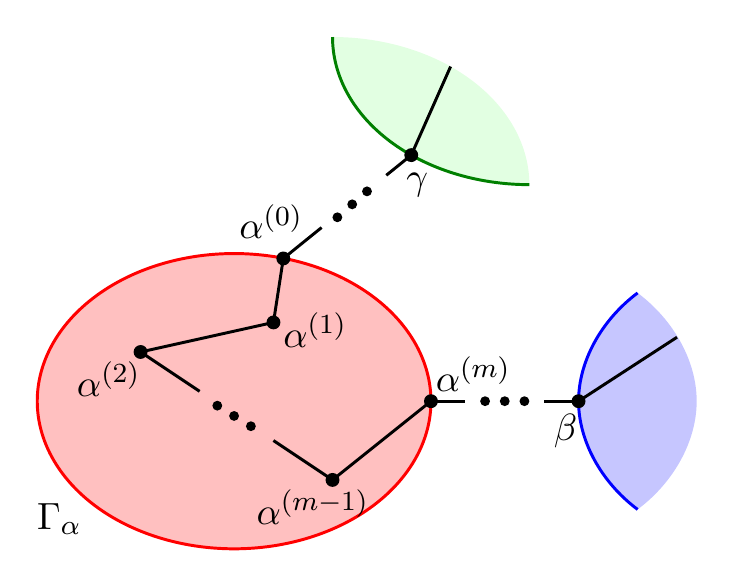}
\caption{How the cut systems are connected in $\mathcal{C}$.}
\label{fig:Thm3}
\end{figure}

We reorder the curves of $\alpha^{(m)}$ and $\beta$ so that 
$\alpha^{(m)}_i\P\beta_i$ for $i\in\{1,\ldots,k\}$ and 
$\alpha^{(m)}_i\D\beta_i$ for $i\in\{k+1,\ldots,g\}$. 
Since $\alpha^{(m)}$ and $\alpha^{(m-1)}$ are connected by one \mbox{type-0} edge, 
there exists a unique curve in $\alpha^{(m)}$ that is not parallel to any curve in $\alpha^{(m-1)}$. 
The suffix of this curve will be denoted by $i^{(m)}$, 
and the suffices of $\alpha^{(m-1)}$ can be set 
so that $\alpha_{i}^{(m-1)}\P\alpha_{i}^{(m)}$ for $i\in\{1,\ldots,g\}\setminus\{i^{(m)}\}$ 
and $\alpha_{i^{(m)}}^{(m-1)}\disjoint\alpha_{i^{(m)}}^{(m)}$. 
Then we reorder $\alpha^{(m-2)},\ldots,\alpha^{(0)}$ in the same manner,
so that for each $h\in\{1,\ldots,m\}$, there exists a unique $i^{(h)}\in\{1,\ldots,g\}$ such that 
$\alpha_{i}^{(h-1)}\P\alpha_{i}^{(h)}$ for $i\in\{1,\ldots,g\}\setminus\{i^{(h)}\}$
and $\alpha_{i^{(h)}}^{(h-1)}\disjoint\alpha_{i^{(h)}}^{(h)}$. 

We then give orientations of the curves of $\alpha^{(0)}$ and $\gamma$ so that $\langle\alpha_i^{(0)}, \gamma_j\rangle$ is $0$ or $1$.
Note that such orientations exist by the goodness of $\alpha^{(0)}$ and $\gamma$. 
Moreover, orientations of $\alpha^{(1)},\ldots,\alpha^{(m)}$ are given inductively in the following claim.
\begin{claim}
\label{clm:entry_change}
There exist orientations of the curves of $\alpha^{(1)},\ldots,\alpha^{(m)}$ such that 
\[
\left[\alpha^{(h)}_i\right] = 
\begin{cases}
\left[\alpha^{(h-1)}_i\right] & (i\ne i^{(h)}) \\[2mm]
\displaystyle \left[\alpha^{(h-1)}_{i^{(h)}}\right]+\sum_{l\ne i^{(h)}}\varepsilon^{(h)}_{l}\left[\alpha^{(h-1)}_l\right] & (i = i^{(h)}), 
\end{cases}
\]
in $H_1(\Sigma)$, for any $h\in\{1,\ldots,m\}$ and for some $\varepsilon^{(h)}_{l}\in\{-1,0,1\}$.  
\end{claim}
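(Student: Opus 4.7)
The plan is to prove the claim by induction on $h$, using Lemma~\ref{lem:0edge_change} at each step.

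For the base case $h=0$ there is nothing to do, since orientations of $\alpha^{(0)}$ have already been fixed (chosen so that $\langle \alpha^{(0)}_i, \gamma_j\rangle \in \{0,1\}$). For the inductive step, assume that orientations of $\alpha^{(h-1)}$ have been defined and satisfy the conclusion of the claim up to index $h-1$. I now need to orient $\alpha^{(h)}$ and verify the recursion at level $h$.

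For each $i \ne i^{(h)}$, the setup already guarantees $\alpha^{(h)}_i \P \alpha^{(h-1)}_i$. I would simply transport the orientation of $\alpha^{(h-1)}_i$ along this isotopy to orient $\alpha^{(h)}_i$, yielding $[\alpha^{(h)}_i] = [\alpha^{(h-1)}_i]$ in $H_1(\Sigma)$, which is exactly the top line of the recursion. The nontrivial curve is $\alpha^{(h)}_{i^{(h)}}$, and this is where Lemma~\ref{lem:0edge_change} does the work: applying that lemma to the type-$0$ edge between $\alpha^{(h-1)}$ and $\alpha^{(h)}$ (after the cosmetic renumbering that puts the changing curve last), we get, for \emph{any} chosen orientation of $\alpha^{(h)}_{i^{(h)}}$, an expression
\[
[\alpha^{(h)}_{i^{(h)}}] = \varepsilon\,[\alpha^{(h-1)}_{i^{(h)}}] + \sum_{l \ne i^{(h)}}\varepsilon^{(h)}_l\,[\alpha^{(h-1)}_l]
\]
with $\varepsilon \in \{-1,+1\}$ and $\varepsilon^{(h)}_l \in \{-1,0,1\}$.

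The only choice left is the sign of $\varepsilon$. Reversing the orientation of $\alpha^{(h)}_{i^{(h)}}$ multiplies every coefficient by $-1$ (and preserves the set $\{-1,0,1\}$), so I can pick the orientation that forces $\varepsilon = +1$. This yields the desired equality
\[
[\alpha^{(h)}_{i^{(h)}}] = [\alpha^{(h-1)}_{i^{(h)}}] + \sum_{l \ne i^{(h)}}\varepsilon^{(h)}_l\,[\alpha^{(h-1)}_l],
\]
completing the induction. There is no substantial obstacle: the whole argument is a bookkeeping of orientations on top of Lemma~\ref{lem:0edge_change}, with the only mild point being the observation that an overall sign flip of $\alpha^{(h)}_{i^{(h)}}$ lets us normalize the leading coefficient to $+1$ without disturbing the $\{-1,0,1\}$ range of the remaining coefficients.
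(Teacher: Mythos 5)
Your proof is correct and follows essentially the same route as the paper, which simply states ``It follows from Lemma~\ref{lem:0edge_change}.'' You have usefully spelled out the implicit bookkeeping (transporting orientations along the isotopies for $i\ne i^{(h)}$, and flipping the orientation of $\alpha^{(h)}_{i^{(h)}}$ if needed to normalize the leading coefficient to $+1$), all of which is sound.
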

\begin{proof}
It follows from Lemma~\ref{lem:0edge_change}. 
\end{proof} 
 
\subsection{Intersection matrices} 
We investigate the differences of cut systems $\alpha^{(0)},\ldots,\alpha^{(m)}$ 
by using matrices having algebraic intersection numbers with $\gamma$ as their entries: 
for $h\in\{0,1,\ldots,m\}$, let us define a $g\times g$-matrix 
\[
I^{(h)}
=\left(
\begin{array}{ccc}
\langle\alpha_1^{(h)}, \gamma_1\rangle & \cdots & \langle\alpha_1^{(h)}, \gamma_g\rangle\\
\vdots & \ddots & \vdots\\
\langle\alpha_g^{(h)}, \gamma_1\rangle & \cdots & \langle\alpha_g^{(h)}, \gamma_g\rangle
\end{array}
\right). 
\]
Let $\mathbf{u}^{(h)}_i=\left( \langle\alpha_i^{(h)}, \gamma_1\rangle, \ldots, \langle\alpha_i^{(h)}, \gamma_g\rangle  \right)$ 
denote the $i$-th row vector of $I^{(h)}$. 
\begin{claim}
\label{clm:row_change}
For $h\in\{1,\ldots,m\}$, 
\[
\mathbf{u}^{(h)}_i = 
\begin{cases}
\mathbf{u}^{(h-1)}_i & (i\ne i^{(h)}) \\[2mm]
\displaystyle \mathbf{u}^{(h-1)}_{i^{(h)}}+\sum_{l\ne i^{(h)}}\varepsilon^{(h)}_{l}\mathbf{u}^{(h-1)}_l & (i = i^{(h)}). 
\end{cases}
\]
\end{claim}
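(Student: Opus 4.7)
The plan is to deduce Claim~\ref{clm:row_change} as an immediate consequence of Claim~\ref{clm:entry_change} together with the fact that the algebraic intersection pairing $\langle \,\cdot\, , \,\cdot\, \rangle$ on the oriented surface $\Sigma$ is a well-defined bilinear form on $H_1(\Sigma)$. In particular, the number $\langle c, \gamma_j \rangle$ depends only on the homology class $[c] \in H_1(\Sigma)$ and is $\Z$-linear in $[c]$.

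First I would handle the easy case $i \ne i^{(h)}$. Claim~\ref{clm:entry_change} asserts that $[\alpha^{(h)}_i] = [\alpha^{(h-1)}_i]$ in $H_1(\Sigma)$, so pairing both sides with $[\gamma_j]$ for every $j \in \{1,\ldots,g\}$ yields
\[
\langle \alpha^{(h)}_i, \gamma_j \rangle = \langle \alpha^{(h-1)}_i, \gamma_j \rangle,
\]
and collecting these equalities over $j$ gives $\mathbf{u}^{(h)}_i = \mathbf{u}^{(h-1)}_i$.

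Next I would treat the case $i = i^{(h)}$. By Claim~\ref{clm:entry_change},
\[
\bigl[\alpha^{(h)}_{i^{(h)}}\bigr] = \bigl[\alpha^{(h-1)}_{i^{(h)}}\bigr] + \sum_{l \ne i^{(h)}} \varepsilon^{(h)}_l \bigl[\alpha^{(h-1)}_l\bigr]
\]
in $H_1(\Sigma)$. Pairing both sides with $[\gamma_j]$ and using bilinearity of the intersection form, we obtain
\[
\langle \alpha^{(h)}_{i^{(h)}}, \gamma_j \rangle = \langle \alpha^{(h-1)}_{i^{(h)}}, \gamma_j \rangle + \sum_{l \ne i^{(h)}} \varepsilon^{(h)}_l \langle \alpha^{(h-1)}_l, \gamma_j \rangle
\]
for each $j \in \{1,\ldots,g\}$. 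Assembling these $g$ scalar equations into a row vector gives exactly the claimed identity $\mathbf{u}^{(h)}_{i^{(h)}} = \mathbf{u}^{(h-1)}_{i^{(h)}} + \sum_{l \ne i^{(h)}} \varepsilon^{(h)}_l \mathbf{u}^{(h-1)}_l$, with the same coefficients $\varepsilon^{(h)}_l$ as in Claim~\ref{clm:entry_change}.

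There is essentially no obstacle here: the whole content is to transport the homological identity from Claim~\ref{clm:entry_change} through the bilinear pairing $\langle\,\cdot\,,\,\cdot\,\rangle$ componentwise. The only thing to be slightly careful about is that the orientations of the curves $\alpha^{(h)}_i$ used to define the entries of $I^{(h)}$ are the same orientations as those furnished by Claim~\ref{clm:entry_change}, which is exactly how the orientations were fixed just before the statement of that claim. Thus the proof reduces to one line once Claim~\ref{clm:entry_change} is invoked.
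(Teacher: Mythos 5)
Your proof is correct and takes essentially the same approach as the paper, which simply states that the claim follows from Claim~\ref{clm:entry_change}; you have merely spelled out the straightforward bilinearity argument that the paper leaves implicit.
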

\begin{proof}
It follows from Claim~\ref{clm:entry_change}. 
\end{proof}
This claim tells us the difference between $I^{(h-1)}$ and $I^{(h)}$, which lies only in their \mbox{$i^{(h)}$-th} rows. 
Note that $I^{(0)}$ has at most one non-zero entry, which is exactly $1$, 
in each row and in each column since $\alpha^{(0)}$ and $\gamma$ are good. 
The matrix $I^{(0)}$ is in some sense the ``simplest'' among $I^{(0)},\ldots,I^{(m)}$, 
and we can consider the other matrices to be obtained from $I^{(0)}$ in order by changing rows according to Claim~\ref{clm:row_change}.

For $h\in\{0,\ldots,m\}$, define a map $\rho_h:\{1,\ldots,g\}\to\{0,\ldots,h\}$ 
by the following: 
\begin{itemize}
 \item
$\rho_0(i)=0$ for any $i\in\{1,\ldots,g\}$, and 
 \item 
$\rho_h(i)=
\begin{cases}
h             & (i  = i^{(h)})\\
\rho_{h-1}(i) & (i\ne i^{(h)}).
\end{cases}$
\end{itemize}
Note that the conditions $i=i^{(h)}$ and $i\ne i^{(h)}$ above are equivalent to 
$\alpha^{(h-1)}_i\disjoint \alpha^{(h)}_i$ and to $\alpha^{(h-1)}_i\P \alpha^{(h)}_i$, respectively. 
We also note that $\rho_h$ is not necessarily surjective, 
but it is injective on $\{i\mid\rho_h(i)\geq1\}$. 

We immediately notice that $\mathbf{u}^{(h)}_i=\mathbf{u}^{(h-1)}_i$ if $\rho_{h}(i)=\rho_{h-1}(i)$. 
Roughly speaking, it means that the $i$-th row $\mathbf{u}^{(*)}_i$ does not change unless $\rho_{*}(i)$ changes. 
More generally, the following holds.
\begin{claim}
\label{clm:rho_eq}
Fix $h\in\{0,\ldots,m\}$ and $i\in\{1,\ldots,g\}$. 
Then $\mathbf{u}^{(h')}_i =\mathbf{u}^{(\rho_h(i))}_i$ for any 
$h'\in\{\rho_h(i),\ldots,h\}$. 
\end{claim}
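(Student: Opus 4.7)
The plan is to prove the claim by induction on $h'$ ranging from $\rho_h(i)$ up to $h$, with the base case $h'=\rho_h(i)$ being a tautology. The substantive content is the inductive step, which will rely on Claim~\ref{clm:row_change} together with the following key observation about the function $\rho_h$: the quantity $\rho_h(i)$ is precisely the largest index $h^\ast\in\{1,\ldots,h\}$ at which $i=i^{(h^\ast)}$, or $0$ if no such index exists. This characterization is immediate from the recursive definition of $\rho_h$.

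Granted this characterization, for every $h'\in\{\rho_h(i)+1,\ldots,h\}$ we have $i\neq i^{(h')}$; otherwise $\rho_h(i)$ would be at least $h'>\rho_h(i)$, a contradiction. Therefore Claim~\ref{clm:row_change} (the case $i\neq i^{(h')}$) gives $\mathbf{u}^{(h')}_i=\mathbf{u}^{(h'-1)}_i$, and combining this with the inductive hypothesis $\mathbf{u}^{(h'-1)}_i=\mathbf{u}^{(\rho_h(i))}_i$ yields the desired equality.

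The only delicate point, and it is really more bookkeeping than obstacle, is formulating the characterization of $\rho_h(i)$ cleanly enough that the "no change in between" statement is visibly correct. I would isolate this as a short preliminary remark (or even a one-line sub-lemma): by unrolling the recursive definition, $\rho_h(i)=\max\{\,h^\ast\in\{1,\ldots,h\}\mid i=i^{(h^\ast)}\,\}$ with the convention $\max\varnothing=0$. Once this is stated, the induction writes itself in two or three lines. No other ingredients from earlier in the paper are needed beyond Claim~\ref{clm:row_change}.
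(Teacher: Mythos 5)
Your proof is correct and is exactly the argument the paper has in mind: the paper's own proof is the single sentence ``It follows from Claim~\ref{clm:row_change},'' and your write-up simply makes the bookkeeping explicit (the characterization $\rho_h(i)=\max\{h^\ast\le h\mid i=i^{(h^\ast)}\}$ and the induction on $h'$). No discrepancy in approach.
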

\begin{proof}
It follows from Claim~\ref{clm:row_change}. 
\end{proof}

Let $\{F_n\}_{n=1}^{\infty}$ denote the Fibonacci $g$-step sequence, 
that is, it is defined by $\displaystyle F_n=\sum_{i=1}^{g}F_{n-i}$ with initial values $F_0=F_1=1$ and $F_n=0$ for $n<0$. 
The following claim will be used to estimate the norms of vectors in matrices. 
\begin{claim}
\label{clm:maximal_entry}
For $h\in\{1,\ldots,m\}$ and $i\in\{1,\ldots,g\}$, the following hold. 
\begin{enumerate}
 \item 
If $\rho_h(i)=0$, 
the entries in $\mathbf{u}^{(h)}_i$ are $0$ except at most one entry, 
which is $1$. 
 \item 
If $\rho_h(i)\geq1$, 
the absolute values of the entries in $\mathbf{u}^{(h)}_i$ are 
at most $F_{\rho_h(i)}$. 
 \item 
If $\rho_h(i)\geq1$, 
the absolute values of the entries in $\mathbf{u}^{(h)}_i$ are 
at most $2^{\rho_h(i)-1}$. 
\end{enumerate}
\end{claim}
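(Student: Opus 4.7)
The plan is to prove all three parts by induction on $h$, leaning on Claims~\ref{clm:row_change} and~\ref{clm:rho_eq} together with the structural fact that each column of $I^{(0)}$ has at most one nonzero entry (which equals $1$), a consequence of the goodness of $\alpha^{(0)}$ and $\gamma$.

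Part~(1) will follow directly: if $\rho_h(i)=0$, then $i\neq i^{(h')}$ for every $h'\in\{1,\dots,h\}$, so Claim~\ref{clm:row_change} gives $\mathbf{u}^{(h)}_i=\mathbf{u}^{(0)}_i$, and the conclusion is inherited from $I^{(0)}$.

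For parts~(2) and~(3), I would use Claim~\ref{clm:rho_eq} to reduce to the case $h=\rho_h(i)$, i.e.\ $i=i^{(h)}$ and $h\ge 1$. Claim~\ref{clm:row_change} together with $\varepsilon^{(h)}_l\in\{-1,0,1\}$ then gives, for every column index $j$,
\[
|u^{(h)}_{ij}|\le\sum_{l=1}^{g}|u^{(h-1)}_{lj}|.
\]
Next, I would split $\{1,\dots,g\}=A\sqcup B$ with $A=\{l:\rho_{h-1}(l)=0\}$ and $B=\{l:\rho_{h-1}(l)\ge 1\}$. For $l\in A$, Claim~\ref{clm:rho_eq} yields $\mathbf{u}^{(h-1)}_l=\mathbf{u}^{(0)}_l$, and hence $\sum_{l\in A}|u^{(0)}_{lj}|\le 1$ by the initial structure. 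For $l\in B$, the values $\rho_{h-1}(l)$ are pairwise distinct elements of $\{1,\dots,h-1\}$ with $|B|\le g$, and the inductive hypothesis bounds each $|u^{(h-1)}_{lj}|$ by $F_{\rho_{h-1}(l)}$ (respectively $2^{\rho_{h-1}(l)-1}$).

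It then remains to verify two numerical inequalities. For~(3), the identity $1+\sum_{r=1}^{h-1}2^{r-1}=2^{h-1}$ closes the argument immediately. For~(2), I would distinguish two subcases: if $|A|\ge 1$ then $|B|\le g-1$ and
\[
|u^{(h)}_{ij}|\le 1+F_{h-1}+F_{h-2}+\cdots+F_{h-|B|},
\]
while if $|A|=0$ then $|B|=g$ (forcing $h\ge g+1$) and $|u^{(h)}_{ij}|\le F_{h-1}+\cdots+F_{h-g}=F_h$. In the first subcase, one combines the Fibonacci recurrence $F_h=F_{h-1}+\cdots+F_{h-g}$ (with $F_n=0$ for $n<0$) with the identity $F_0+F_1+\cdots+F_{h-1}=F_h$ valid for $h\le g$. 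The main bookkeeping obstacle I anticipate is separating the regimes $h\le g$ and $h>g$ in the Fibonacci calculation, but this is routine; the conceptual heart of the argument is the reduction via Claim~\ref{clm:rho_eq} and the splitting into $A$ and $B$, which isolates exactly where the at-most-one-nonzero-per-column structure of $I^{(0)}$ is consumed.
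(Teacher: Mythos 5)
Your proof is correct and takes essentially the same approach as the paper: the same reduction to $i=i^{(h)}$ via Claim~\ref{clm:rho_eq}, the same partition of indices into $\Lambda_0/\Lambda_1$ (your $A/B$), and the same induction on $h$. The only cosmetic differences are that you prove part~(3) directly via the geometric-series identity rather than deriving it from part~(2) using $F_k\leq 2^{k-1}$, and your explicit case split on $|A|$ is a touch more careful than the paper's one-line chain, which is harmlessly loose when $h-1>g$.
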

\begin{proof}
\noindent(1) 
It follows from Claim~\ref{clm:rho_eq} 
and the goodness of $\alpha^{(0)}$ and $\gamma$. 

\noindent(2) 
Fix $j\in\{1,\ldots,g\}$ arbitrarily. 
By Claim~\ref{clm:rho_eq}, 
it is enough to check the claim for $h\in\{1,\ldots,m\}$ and $i=i^{(h)}$, 
so we prove $\left|\left\langle\alpha_{i^{(h)}}^{(h)},\gamma_j\right\rangle\right|\leq F_{h}$. 
The proof will proceed by induction on $h$. 

First consider the case $h=1$. 
Then $\left|\left\langle\alpha_{i^{(1)}}^{(1)},\gamma_j\right\rangle\right|$ is $0$ or $1$ by (1) and Claim~\ref{clm:row_change}, 
which is the desired conclusion. 

We next consider the case $h\in\{2,\ldots,m\}$ and 
suppose that 
$\left|\left\langle\alpha_{i}^{(h')},\gamma_j\right\rangle\right|\leq F_{\rho_{h'}(i)}$ 
holds for $h'\in\{1,\ldots,h-1\}$. 
Define two sets $\Lambda_0,\Lambda_1\subset\{1,\ldots,g\}$ as 
\[
\Lambda_0=\{i\mid\rho_{h-1}(i)=0 \}
\quad\text{and}\quad
\Lambda_1=\{i\mid\rho_{h-1}(i)\geq1 \}. 
\]
Then we have 
$|\Lambda_1|\leq \min\{g,h-1\}$ and 
$\left|\left\langle\alpha_{i}^{(h-1)},\gamma_j\right\rangle\right|\leq F_{\rho_{h-1}(i)}$ for $i\in\Lambda_1$. 
Note that we have made no assumption whether $h$ or $g$ is larger. 
Moreover, by Claim~\ref{clm:rho_eq} and the goodness of $\alpha^{(0)}$ and $\gamma$, 
there exists at most one $i\in \Lambda_0$ such that 
$\left|\left\langle\alpha_{i}^{(h-1)},\gamma_j\right\rangle\right|=1$ and 
$\left|\left\langle\alpha_{i'}^{(h-1)},\gamma_j\right\rangle\right|=0$ for $i'\in\Lambda_0\setminus\{i\}$. 
Therefore, by Claim~\ref{clm:row_change}, we have 
\begin{align*}
\left|\left\langle\alpha_{i^{(h)}}^{(h)},\gamma_j\right\rangle\right|
&=\left|\left\langle\alpha_{i^{(h)}}^{(h-1)},\gamma_j\right\rangle + \sum_{k\ne i_h}\varepsilon^{(h-1)}_{k}\left\langle\alpha_{k}^{(h-1)},\gamma_j\right\rangle  \right|
\leq \sum_{k=1}^{g}\left| \left\langle\alpha_{k}^{(h-1)},\gamma_j\right\rangle \right|\\
&\leq1+\sum_{k\in \Lambda_1} F_{\rho_{h-1}(k)}
\leq 1+\sum_{l=1}^{\min\{g,h-1\}} F_{h-l}=F_h, 
\end{align*}
which concludes the proof. 

\noindent(3)
It is obvious from (2) and $F_k\leq 2^{k-1}$. 
\end{proof}

\subsection{Submatrices and Proof of Theorem~\ref{thm:3}}
The matrix $I^{(m)}$ presents a homological relation between $\alpha^{(m)}$ and $\gamma$. 
Indeed, the homology group $H_1(X)$ can be computed by a submatrix of $U$ as we will see in Claim~\ref{clm:invariant_factor}. 
We give the proof of Theorem~\ref{thm:3} at the end of this subsection. 

Set 
$U=\left(\begin{array}{c}
\mathbf{u}^{(m)}_{1}\\ \vdots\\ \mathbf{u}^{(m)}_{k}
\end{array}\right)$, 
which is a $k\times g$-matrix consisting of the upper $k$ rows of $I^{(m)}$. 

\begin{claim}
\label{clm:invariant_factor}
The product of all the invariant factors of $U$ coincides with $p$. 
\end{claim}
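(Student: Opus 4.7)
The plan is to recognise $U$ as a presentation matrix for $H_1(X)$ arising from a handle decomposition of $X$ read off a Kirby diagram, and then invoke the structure theorem for finitely generated abelian groups.

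First I would use the Kirby diagram $L = L_1 \sqcup L_2$ constructed in Subsection~\ref{subsec:Kirby_diag} from the trisection diagram $(\Sigma; \alpha^{(m)}, \beta, \gamma)$. Here $L_1$ consists of $k = k_1$ dotted circles $L^1_1,\ldots,L^1_k$, one for each $\alpha^{(m)}_i$ with $\alpha^{(m)}_i \P \beta_i$, and $L_2$ consists of $g$ framed $2$-handles $L^2_1,\ldots,L^2_g$ coming from $\gamma_1,\ldots,\gamma_g$. Capping off the boundary $\#^{k_1}(S^1 \times S^2)$ with $3$-handles and a $4$-handle (canonical by \cite{LP72}) produces $X$, but these higher-index handles contribute neither generators nor relations to $H_1$. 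Standard Kirby calculus then presents $H_1(X)$ by $k$ generators (one per dotted circle) and $g$ relations (one per framed $2$-handle), the coefficient of the $i$-th generator in the $j$-th relation being $\mathrm{lk}(L^1_i, L^2_j)$. Equivalently, $H_1(X) \cong \mathrm{coker}(M : \mathbb{Z}^g \to \mathbb{Z}^k)$ for the $k \times g$ linking matrix $M$.

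Second, I would verify that $\mathrm{lk}(L^1_i, L^2_j) = \pm\langle \alpha^{(m)}_i, \gamma_j \rangle$, so that $M$ agrees with $U$ up to sign changes of individual rows and columns. Since $L^1_i$ is a push-off of $\alpha^{(m)}_i$ into $H_+$, and $\alpha^{(m)}_i = \partial D_i$ for a meridian disk $D_i \subset H_-$, one can Seifert-span $L^1_i$ by a disk in $S^3$ built from $D_i$. The curve $L^2_j = \gamma_j$ lies on $\Sigma$ and meets this disk transversely precisely at the points of $\alpha^{(m)}_i \cap \gamma_j$, with signs determined by the intersection pairing on $\Sigma$; the surface framing of $\Sigma$ ensures that no additional twists appear. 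The sign ambiguity only amounts to sign changes of rows or columns of $U$, which do not affect its Smith normal form and hence do not affect its invariant factors.

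Finally, since $|H_1(X)| = p$ is finite, $\mathrm{coker}(U)$ has order $p$, which forces $U$ to have full row rank $k$ and all $k$ of its invariant factors $d_1 \mid \cdots \mid d_k$ to be nonzero. By the structure theorem, $\mathrm{coker}(U) \cong \mathbb{Z}/d_1 \oplus \cdots \oplus \mathbb{Z}/d_k$, whose order is $d_1 \cdots d_k$, so $d_1 \cdots d_k = p$, as asserted. The main subtle step is the linking-number identification in step two; the remaining arguments are standard bookkeeping within the Kirby-calculus framework of Subsection~\ref{subsec:Kirby_diag}.
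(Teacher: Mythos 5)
Your proposal is correct and takes essentially the same route as the paper: both read off a Kirby diagram from $(\Sigma;\alpha^{(m)},\beta,\gamma)$, identify the $(i,j)$ entry of $U$ with the linking number $\mathrm{lk}(L^1_i,L^2_j)$ (up to signs, which do not affect invariant factors), recognize $U$ as the presentation matrix for $H_1(X)$ arising from the handle chain complex, and conclude from $|H_1(X)|=p$. Your write-up simply spells out the Seifert-disk justification of the linking-number identity and the Smith normal form step that the paper leaves implicit.
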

\begin{proof}
A Kirby diagram obtained from $(\Sigma;\alpha^{(m)},\beta,\gamma)$ 
consists of $k$ dotted circles $L^1_1,\ldots,L^1_k$ and $g$ framed knots $L^2_1,\ldots,L^2_g$.
The handle decomposition described by this diagram defines a chain complex, 
and the matrix representation of the boundary operator on $2$-chains is given by $U$ 
since the linking number of $L^1_i$ and $L^2_j$ is equal to $\langle\alpha_i^{(m)}, \gamma_j\rangle$.
Hence the claim follows by $|H_1(X)|=p$. 
\end{proof}

Let $\mathbf{v}_i$ be the $i$-th column vector of $U$, and then 
$U$ is written as $U=\Big(\mathbf{v}_1 \cdots \mathbf{v}_g\Big)$. 
\begin{claim}
\label{clm:deteminant}
There exists $\{i_1,\ldots,i_k\}\subset\{1,\ldots,g\}$ such that 
$p\leq\left|\det \Big(\mathbf{v}_{i_1} \cdots \mathbf{v}_{i_k} \Big)\right|$. 
\end{claim}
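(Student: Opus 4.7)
The plan is to deduce this from Claim~\ref{clm:invariant_factor} via the standard identification of the $k$-th determinantal divisor of $U$ with the product of its invariant factors. Because Claim~\ref{clm:invariant_factor} says this product equals $p\geq2$, in particular no invariant factor vanishes, so $U$ has rank $k$. Since the rank of an integer matrix equals the largest size of a non-zero minor, there is at least one choice of $k$ columns $\mathbf{v}_{i_1},\ldots,\mathbf{v}_{i_k}$ making $\det\Big(\mathbf{v}_{i_1}\cdots\mathbf{v}_{i_k}\Big)\ne0$.

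To upgrade such a non-zero minor to one of absolute value at least $p$, I would invoke the elementary divisor theorem: write $U=PDQ$ with $P\in GL_k(\Z)$, $Q\in GL_g(\Z)$, and $D$ in Smith normal form with diagonal entries $d_1,\ldots,d_k$. Every $k\times k$ minor of $U$ is an integer linear combination, via the Cauchy--Binet formula, of $k\times k$ minors of $D$, and each of those is either $0$ or $\pm d_1\cdots d_k$. Hence every $k\times k$ minor of $U$ is divisible by $d_1\cdots d_k=p$. Combined with the non-zero minor produced in the previous paragraph, this yields the desired submatrix with $\left|\det\Big(\mathbf{v}_{i_1}\cdots\mathbf{v}_{i_k}\Big)\right|\geq p$.

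The proof is essentially a single application of Smith normal form, so there is no real obstacle; the only point that might require care is confirming that the product of invariant factors in Claim~\ref{clm:invariant_factor} really equals the $k$-th determinantal divisor (the gcd of all $k\times k$ minors), rather than some other invariant. Once that is noted, divisibility together with the rank argument closes the claim immediately.
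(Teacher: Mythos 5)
Your proposal is correct and follows essentially the same route as the paper: the paper simply cites the standard fact that the product of the invariant factors of $U$ equals the greatest common divisor of all $k\times k$ minors, from which the claim follows since that gcd is $p\geq 2\neq 0$. Your Smith normal form plus Cauchy--Binet argument is just the standard proof of that cited fact, so you have correctly reconstructed the same idea with more detail.
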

\begin{proof}
The claim follows from the fact that the product $p$ of all the invariant factors of $U$ is equal to the greatest common divisor of all $k\times k$ minors of $U$. 
\end{proof}
Actually, for any $\{i_1,\ldots,i_k\}\subset\{1,\ldots,g\}$, 
if $\det \Big(\mathbf{v}_{i_1} \cdots \mathbf{v}_{i_k} \Big)\ne0$, 
then we have $p\leq\left|\det \Big(\mathbf{v}_{i_1} \cdots \mathbf{v}_{i_k} \Big)\right|$. 
Let us fix such a matrix and write $U_0=\Big(\mathbf{v}_{i_1} \cdots \mathbf{v}_{i_k} \Big)$. 
We then prove Theorem~\ref{thm:3}. 

\begin{proof}[Proof of Theorem~\ref{thm:3}]
Let $\mathbf{u}_i$ be the $i$-th row vector of $U_0$, and then 
$U_0$ will be written as 
$U_0=\left(\begin{array}{c}
\mathbf{u}_{1}\\ \vdots\\ \mathbf{u}_{k}
\end{array}\right)$. 
Note that $\mathbf{u}_{1},\ldots,\mathbf{u}_{k}$ are non-zero vectors since $\det U_0\ne0$.
It is enough to prove that $C\sqrt{\,\log p}\leq m$ for some $C>0$ since $3m\leq L_X$. 
The proof is divided into two cases (i) $k>m$ and (ii) $k\leq m$. 

\vspace{2mm}
(i) Suppose $k>m$. Define two sets $\Lambda'_0,\Lambda'_1\subset\{1,\ldots,k\}$ as 
\[
\Lambda'_0=\{i\mid\rho_{m}(i)=0 \}
\quad\text{and}\quad
\Lambda'_1=\{i\mid\rho_{m}(i)\geq1 \}. 
\]
Then $|\Lambda'_0|\geq k-m>0$, 
and $\mathbf{u}_{i}$ is a standard unit vector for $i\in\Lambda'_0$ by Claim~\ref{clm:maximal_entry}-(1). 
We also have $1\leq|\Lambda'_1|\leq m$. 
For $i\in\Lambda'_1$, set 
$\displaystyle \mathbf{u}'_{i}=\mathbf{u}_{i}-\sum_{l\in\Lambda'_0}(\mathbf{u}_{i}\cdot\mathbf{u}_{l})\,\mathbf{u}_{l}$, 
where $\cdot$ is the dot product. 
Then, for $i\in\Lambda'_1$, $|\Lambda'_1|$ entries of $\mathbf{u}'_{i}$ are the same as those of $\mathbf{u}_{i}$, 
and the others are $0$. 
Let $U_0'$ be the matrix obtained from $U_0$ by replacing $\mathbf{u}_{i}$ with $\mathbf{u}'_{i}$ for all $i\in\Lambda'_1$, and then clearly $\det U_0'=\det U_0$. 
Moreover, the absolute values of the non-zero entries of $\mathbf{u}'_{i}$ is at most $2^{\rho_m(i)-1}$ by Claim~\ref{clm:maximal_entry}-(3), 
and we have $\|\mathbf{u}'_{i}\|\leq 2^{\rho_m(i)-1}|\Lambda'_1|^{1/2}\leq 2^{\rho_m(i)-1}m^{1/2}$. 
By the Hadamard inequality, we have
\begin{align*}
\det U'_0 
\leq \prod_{i\in\Lambda'_0}\|\mathbf{u}_{i}\| \prod_{i\in\Lambda'_1}\|\mathbf{u}'_{i}\|
\leq \prod_{i\in\Lambda'_1}2^{\rho_m(i)-1}m^{1/2}
\leq \prod_{i=1}^{m} 2^{i-1}m^{1/2}
= 2^{m(m-1)/2}m^{m/2}. 
\end{align*}
By Claim~\ref{clm:deteminant} and an easy calculation, 
we obtain $\sqrt{\,\log_2 p}<m$ in this case. 

\vspace{2mm}
(ii) Suppose $k\leq m$. 
Then $\|\mathbf{u}_{i}\|\leq 2^{\rho_m(i)-1}k^{1/2}$ for $i\in\{1,\ldots,k\}$ by Claim~\ref{clm:maximal_entry}~(3). 
By the Hadamard inequality, we have
\begin{align*}
\det U_0 
\leq \prod_{i=1}^{k}\|\mathbf{u}_{i}\|
\leq \prod_{i=1}^{k}2^{\rho_m(i)-1}k^{1/2}
\leq \prod_{i=1}^{k} 2^{m-i}k^{1/2}
= 2^{k(2m-k-1)/2}k^{k/2}. 
\end{align*}
By Claim~\ref{clm:deteminant} and an easy calculation, we have 
\[
\frac12\left( \frac{2}{k}\log_2p+k-\log_2k+1\right)\leq m. 
\]
We temporarily treat $k$ as a real variable, 
and then the minimum of the left hand side of the above inequality is attained when $\displaystyle k=\frac12\left(\log_2e+\sqrt{\left(\log_2e\right)^2+8\log_2p}\,\right)$. 
Therefore, we have
\[
\frac12\sqrt{\left(\log_2e\right)^2+8\log_2p}
-\frac12\log_2\left(\log_2e+\!\sqrt{\left(\log_2e\right)^2+8\log_2p}\,\right)
+1
\leq m. 
\]
The left hand side of the above is positive and equal to $\sqrt{2\log_2p}$ asymptotically. 
The proof is completed. 
\end{proof}
\begin{remark}
Note that we can also obtain Corollary~\ref{cor:S(L(2,1))} from the above proof. 
\end{remark}

\end{document}